\numberwithin{equation}{section}
\newtheorem{theorem}{Theorem}[section]
\newtheorem{lemma}[theorem]{Lemma}
\newtheorem{corollary}[theorem]{Corollary}
\def\Re{\mathop{\rm Re}\nolimits}
\def\ddbar{\partial\bar\partial}
\def\d{\partial}
\def\cO{{\mathcal O}}
\def\cR{{\mathcal R}}
\def\cE{{\mathcal E}}
\def\cP{{\mathcal P}}
\def\cM{{\mathcal M}}
\def\cV{{\mathcal V}}
\def\cU{{\mathcal U}}
\def\cH{{\mathcal H}}
\let\ol=\overline
\let\ep=\varepsilon
\def\bC{{\mathbb C}}
\def\bR{{\mathbb R}}
\def\bD{{\mathbb D}}
\def\bE{{\mathbb E}}
\def\bI{{\mathbb I}}
\def\bF{{\mathbb F}}
\def\bH{{\mathbb H}}
\def\a{{\alpha}}
\def\z{{\zeta}}
\def\b{\beta}
\title[Regularization of psh functions]
{Regularization of plurisubharmonic functions with a net of good points}
\author{Long Li}
\address{Institute Fourier, 100 rue des maths 38610 Gi\`eres, Grenoble, France}
\email{Long.Li1@univ-grenoble-alpes.fr}
\begin{document}

\maketitle

\begin{abstract}
The purpose of this article is to present a new regularization technique of quasi-plurisubharmoinc functions 
on a compact K\"ahler manifold. The idea is to regularize the function on local coordinate balls first, 
and then glue each piece together. 
Therefore, all the higher order terms in the complex Hessian of this regularization vanish at the center of each coordinate ball,
and all the centers build a $\delta$-net of the manifold eventually. 
\end{abstract}

\section{Introduction}
Regularization of plurisubharmonic($psh$) functions is an important subject in Several Complex Variables.
It has been widely used in analysis, K\"ahler geometry and algebraic geometry. 
The early known fact about regularization of $psh$ function on a Euclidean ball is that
one can always regularize a $psh$ function $\phi$ by taking convolution with respect to 
some mollifier $\rho_{\ep}$; then $\phi_{\ep}: = \phi * \rho_{\ep} $ is still $psh$ on a smaller ball,
and $\phi_{\ep}$ decreases to $\phi$ while $\ep$ converges to zero. 
Since convolution is commutable with any differential operator in Euclidean space, this implies that we have 
$$ \ddbar \phi_{\ep} = \ddbar\phi* \rho_\ep.$$

However, the situation is not so simple for regularization problems of (quasi)-$psh$ functions on a complex manifold $X$.
In the 90's, Demailly discovered several ways to study this problem on a compact K\"ahler manifold (\cite{Dem83}, \cite{Dem92}), 
and later on a general compact complex manifold (\cite{Dem97}) equipped with some Hermitian metric.
Then Blocki and Kolodziej (\cite{BK}) found a simpler technique of regularization 
for quasi-$psh$ functions with Lelong number zero everywhere.
Moreover, Eyssidieux-Guedj-Zeriahi (\cite{EGZ}) proved another regularization theorem,
by which one can adjust the approximation sequence to continuous quasi-$psh$ functions 
with minimal singularity, provided the cohomology class is big.  

The motivation of this paper is to investigate both the upper bound and the lower bound of the complex Hessian of the regularization.
This leads us to a ``localized" or ``discrete" version of Demailly's technique (\cite{Dem83}, \cite{Dem97}, \cite{Dem92}).
The naive idea is that we can first take convolution locally for a quasi-$psh$ function $\phi$,
and then try to glue each piece together. However, this would not work in general,
and the obstruction exactly comes from the difficulty of combining a ``good glueing" and a ``good Hessian control". 
Therefore, some more delicate analysis (see Sections \ref{sec-ltg}, \ref{sec-hessian}) is necessary to fulfill this idea. 

In fact, the regularization $\phi_\ep$ appearing in Demailly's work (\cite{Dem83}, \cite{Dem97}) has the following Hessian 
\begin{equation}
\label{int-001}
\ddbar \phi_{\ep} =  \ddbar \phi * \rho_{\ep} + \cH + \mathcal{V} + \cR,
\end{equation}
where $\cH$ consists of higher order terms twisted by the curvature of some background metric $\omega$, e.g. $( c_{j\bar kl\bar m} z^l\bar z^m \ddbar \phi) *\rho_{\ep}$ and so on,
and $\cV$ is a term controlled by the Lelong number of $\phi$ and the curvature tensor of $\omega$. 
The remaining term is an error controlled by $O(\ep\log\ep^{-1})$ in (\cite{Dem83}),
and this order has been strengthened to $O(\ep^{N}\log\ep^{-1})$ for arbitrary $N$ in (\cite{Dem97}).
However, in the latter estimate, the information about the higher order term $\cH$ is lost. 

In our case, the global behavior of the approximating function $\phi_{\ep}$ is comparable to Demailly's result.
We also calculated explicit formulas for all the higher order terms $\cH$ with an error control $\cR = O(\ep^2\log\ep^{-1})$.
Moreover, since we are doing convolutions locally first,
it is expected that the complex Hessian $\ddbar\phi_{\ep}$ behaves much better for those points very close to a center.
In fact, we proved (see Theorem (\ref{thm-002}) and Corollary (\ref{cor-001}) for precise statements) that 
there exist a $\ep$-net $\cP_{\ep}$ with the regularization $\phi_{\ep}$, such that the complex Hessian is 
\begin{equation}
\label{000} 
\ddbar\phi_{\ep} = \ddbar \phi * \rho_{\ep} + \mathcal{V},  
\end{equation}
at each point $p\in \cP_{\ep}$. 
That is to say, all higher order terms $\cH$ and the remaining term $\cR$ completely vanish at these finite many points in our regularization.
In fact, equation (\ref{000}) is basically the best situation for which one can hope,
for global regularization of a quasi-$psh$ function on a compact complex manifold.

We hope this new regularization technique could be useful when all important information is concentrated around a net of points,
e.g. the Gromov-Hausdroff limit of a sequence of K\"ahler manifolds. 
Moreover, we also would like to see its application to regularity problems for solutions of certain geometric equation,
e.g. the complex homogeneous Monge-Amp\`ere equation \cite{BD} on a compact K\"ahler manifold with pseudoconvex boundary. 

$\mathbf{Acknowledgement}$: The author is very grateful to Prof. Demailly, for introducing this problem
and lots of useful discussion, and the author also would like to thank Prof. X.X. Chen, Prof. M. P\u aun, and Prof. V. Tosatti 
for further discussion of this paper. Finally, the author wants to thank  Dr. Tao Zheng and Jian Wang for clearing some problems of this paper.

\section{Statement of the Theorem}
\label{sec-stt}

Let $(X,\omega)$ be a K\"ahler manifold with its associated K\"ahler form, 
and a function $\phi$ is called $\omega$-$psh$ if it is upper semi-continuous and satisfies 
$\omega + i\ddbar\phi \geq 0$,
in the sense of current on $X$.

Then we are going to demonstrate the basic idea to regularize this function $\phi$ as follows. 
First, we cover the manifold $X$ by finite many coordinate balls with small radius, 
such that the K\"ahler metric $\omega$ is normal at the center of each ball. 
Then we build the local regularization by taking convolutions of $\phi$ with respect to certain mollifier.
However, the convolution is not taking on a Euclidean ball, but a ball twisted by the metric. 
This will enable us to glue each piece well, and it gives a way to 
compute the complex Hessian for the approximation sequence.

\subsection{Covering construction}
Let $T = i\ddbar\phi$ be a current such that $\varphi$ is a $\omega$-$psh$ function on $X$.
For each point $p\in X$, there is a normal coordinate ball induced by the metric $\omega$
centered at this point with radius $\sqrt{2}\delta$. Then all of these balls form an open covering $\mathcal{U}$ of the manifold $X$.
Here we assume the geodesic distance $\delta$ is much smaller than the injective radius of $X$,
and then these normal coordinates are varying smoothly w.r.t. their centers. 

Thanks to the well known Zorn Lemma, we can select a finite number of elements $\{U_j\}_{1\leq j \leq N}$ in $\cU$, 
such that the following two properties hold:
\begin{enumerate}
\smallskip
\item[(a)] for each pair of centers $(p_j, p_k), j\neq k$, the geodesic distance between them is no smaller than $\sqrt{2}\delta$, i.e.
$d(p_j, p_k) \geq \sqrt{2}\delta $;
\smallskip
\item[(b)] these open sets are a covering of the manifold, i.e. $ X \subset \bigcup_{1\leq j\leq N} U_j $.
\end{enumerate}

In fact, we hope to see an even better covering, such that $(1-\a) U_j$ still forms an open covering of $X$ for any small $\a >0$.
However, this is not clear to be true in general. But the obstruction indeed comes from the local convexity of these geodesic balls. 
Therefore, we claim that we can do the following small surgery on these geodesic balls, in such a way that 
the perturbed geodesic balls $U''_j$ satisfy the following properties: 
\begin{enumerate}
\smallskip
\item[(a')] for each $j$, the distance between the center $p_j$ of the perturbed ball $U''_j$ and its boundary is no less than $1.3\delta$,
i.e. $d(p_j, \d U''_{j}) \geq 1.3\delta$;
\smallskip
\item[(b')] around each $p_j$, there exists a smaller ball $V_j$ centered at $p_j$ with radius $\delta/10$, such that it never intersects with other $U''$ balls
i.e. $V_j \cap U''_k = \emptyset $ for all $k\neq j$;
\smallskip
\item[(c')] these open sets form a covering of the manifold, i.e. $X \subset \bigcup_{1\leq j\leq N} U''_j $.
\end{enumerate}

This is because we can dig some small holes on the boundary of $U_k$ if it is too close to another balls' centers.
Suppose the distance between a center $p_j$ and the boundary $\d U_k$ is smaller than $\delta/10$ for some $k\neq j$
(at worst, $p_j$ is on the boundary $\d U_k$). Then we put $U'_k: = U_k - V_j$, 
where $V_j$ is the geodesic ball centered at $p_j$ with radius $\delta/10$. 
And $U'_k$ with the open sets $\bigcup_{j\neq k}U_k$ still forms an open covering of $X$! 
But we successfully separate a center point $p_j$ with its boundary at least in $\delta/10$ without changing other centers. 
Once we continue this process, it will terminate after finite many steps, and our claim is proved. 

Furthermore, we could switch geodesic distance by Euclidean distance on each (perturbed) normal coordinate ball.
The reason is that these two distances can only differ by a order of $\delta^2$ (Lemma 8.2, \cite{Dem83}) for small enough radius, i.e. 
$$ d(p_j, q) = d_{Euc}(p_j, q) + O(\delta^2), $$
for each center $p_j$ and any point $q\in U''_j$.

\subsection{Truncated metrics}
Now we take a covering $\{ \tilde{U}_j \}$ of slightly larger concentric normal coordinate balls, such that 
there is a local trivialization $\tau_j: \tilde{U}_j \rightarrow B(2\delta)$ for each $j$,
and we set $\varphi_j = \varphi\circ \tau_j^{-1}$ on $B(2\delta)$. 
Let $\xi$ be a tangent vector on $X$ at a point $z\in B(2\delta)$ under the trivialization
$d\tau_j: TX|_{U_j} \rightarrow B(2\delta)\times \bC^n $. 
And the K\"ahler metric $\omega$ introduces a norm for such tangent vector as follows:
$$ || \xi ||_{j,z} = g^{(j)}_{\a\bar\b}(z) \xi^{\a}\bar{\xi}^\b.$$

Since we used normal coordinates on $B(2\delta)$,
the K\"ahler metric has the following Talyor expansion in the ball
\begin{eqnarray}
\label{rp-002}
g^{(j)}_{\a\bar\b}(z) &=& \delta_{\a\b} - c_{\a\bar\b\mu\bar\lambda} z^{\mu}\bar{z}^{\lambda} 
\nonumber\\
&+& e_{\a\bar\b\mu\bar\lambda\gamma}z^{\mu}\bar z^{\lambda}z^\gamma + e_{\a\bar\b\bar\lambda\mu\bar\nu}\bar z^{\lambda}z^{\mu}\bar z^{\nu} + O(|z|^4),
\end{eqnarray}
where the tensor $c_{\a\bar\b\mu\bar\lambda}$ corresponds to the curvature of the metric $\omega_0$ at the origin. 
And we can arrange that the complex conjugate of tensor $e_{\a\bar\b\bar\lambda\mu\bar\nu}$ is $e_{\b\bar\a\lambda\bar\mu\nu}$.
Moreover, all holomorphic or anti-holomorphic indices in the tensor $e_{\a\bar\b\bar\lambda\mu\bar\nu}$ are commutable as follows from the K\"ahler assumption.
Put $$a_{\a\bar\b}(z): =  \delta_{\a\b} - \frac{1}{2}c_{\a\bar\b\mu\bar\lambda} z^{\mu}\bar{z}^{\lambda} +  e_{\a\bar\b\mu\bar\lambda\gamma}z^{\mu}\bar z^{\lambda}z^\gamma  , $$
and we can define the following $n\times n$ matrix of functions as 
$$A(z): = (a_{\a\bar\b}(z)),$$
and notice that this matrix $A$ is Hermitian up to the second order of $z$.   
Moreover, we have the following identity between matrices
\begin{equation}
\label{rp-003}
g(z) = A^*(z)A(z) + O(|z|^4). 
\end{equation}

The next step is to take a smooth cut-off function $\chi$ on $\bR$ such that 
$$ \chi(t): = -\exp \left( \frac{1}{t-1} \right),$$ for $t<1$, 
and $\chi(t) = 0$ for $t\geq 1$. Then we have 
$$ \chi'(t) = \frac{1}{(t-1)^2} \exp \left( \frac{1}{t-1} \right). $$
There is a twisted convolution on $\phi_j$ around any point $z\in B(\sqrt{2}\delta)$ defined as
\begin{equation}
\label{rp-004}
\tilde\phi_j(z,r): = \int_{\eta\in\bC^n}\phi_j(z + r A^{-1}(z)\cdot\eta) \chi'(|\eta|) d\lambda(\eta),
\end{equation}
for  all $0<r<\delta/2$. We will call this $r$ the \emph{radius of the convolution ball}. 
We will prove this convolution still consists of a local $\omega$-$psh$ function up to some small errors in Section (\ref{sec-hessian}),
by computing its complex Hessian.

Next we try to glue $\tilde{\phi}_j$ by taking maxima, in such a way that the glueing process will produce a global quasi-$psh$ function as follows.
For each point $z\in X$, consider a set $\mathcal{A}(z): = \{ j;\ \ z\in U''_j \}$, and then define 
\begin{equation}
\label{rp--005}
\Phi (z) = \max_{j\in\mathcal{A}(z)} \{\tilde\phi_j \}.
\end{equation}
However, this does not always work. The problem arises from the boundary values appearing in the maximum.
Therefore, in order to succeed glueing a set of local $psh$ functions $\{\Phi_j\}$,
we need to require the following condition 
\begin{equation}
\label{rp--006}
\lim_{p \rightarrow \d U''_j} \Phi_j (p) < \max_{k\neq j} \{ \Phi_k (p) \},
\end{equation}
for all $k$ such that the point $p$ stays in the interior of $U''_k$. 
In other words, the maximum value at each point $p$ should never be obtained by some boundary value of $\tilde\phi_j$. 

Moreover, even if we glue them successfully by taking a maximum,
the resulting function is only continuous since the values of $\Phi_j$ and $\Phi_k$ 
could overlap each other. In order to investigate this problem, we can use the so called $regularized$-$maximum$ operator $\cM_{\tau}$ to smooth them out. 
But this causes another small perturbation of the upper bound of the Hessian.

In order to achieve these goals, we need twist the boundary values a bit.
Define a new quasi-$psh$ function on each $U''_j$ as 
$$\Phi_j(z,r) : = \tilde\phi_j(z,r)  - h(\delta, z)|z|^2, $$ 
where $h(\delta, z)$ is certain smooth function defined on $U''_j$, which converges to zero while $\delta\rightarrow 0$. 
This auxiliary function $h(\delta,z)$ will be determined later in Section (\ref{sec-sub-003}),
and we call the whole term $h(\delta, z)|z|^2$ as the \emph{twisted boundary} for our regularization.

\subsection{Statement}
Recall that the collection of perturbed balls $\{ U''_j \}$ is an open covering of $X$ satisfying conditions (a') and (b'),
and the radius of each such ball is close to $\sqrt{2}\delta$. 
We denote $\cP(\delta): = \{ p_j \}_{1\leq j\leq N}$ by the collection of all centers of such balls,
and then $\cP(\delta)$ forms a $\delta$-net of the manifold. 

On the other hand, the radius of the convolution ball is $r$.
Therefore, in order to make a glueing, it is necessary to specify the relation between these two radius. 
We claim that the glueing will succeed if we pick up $r = O(\delta^3)$, 
and the following regularization holds.

\begin{theorem}
\label{thm-002}
Suppose $\phi$ is a $\omega$-psh function on a K\"ahler manifold $X$. 
Then there exists a family of smooth functions $\Phi_{\delta}$ converging to $\phi$ pointwise as $\delta\rightarrow 0$, 
such that the following properties hold

\begin{enumerate}

\item[(1)] 
For each point $p_j\in\cP(\delta)$,
there exists an open coordiante ball $V_j$ centered at it with uniform size in $j$, 
which can be identified with the Euclidean ball $B(\delta/10)$,
and the complex Hessian of $\Phi_{\delta}$ can be computed at any point $z_0\in V_j$ as 
\begin{eqnarray}
\label{rp-thm-002}
\frac{\d^2 \Phi_{\delta}}{\d z^l \d \bar z^m} (z_0)  s^l \bar s^m &=& \frac{1}{r^{2n}} \int \chi' \frac{\d^2\phi}{\d u^l \d \bar u^m } S^l\bar S^m d\lambda(u)
\nonumber\\
&+& \frac{1}{r^{2n-2}} \int \chi' \frac{\d^2\phi}{\d u^l \d \bar u^m } \cH_{q\bar lm\bar p} s^q\bar s^p d\lambda(u)
\nonumber\\
&-& \frac{1}{r^{2n-2}} \int \chi \frac{\d^2\phi}{\d u^k \d\bar u^j}  (c_{j\bar kl\bar m } - \cE )  
s^l\bar s^m d\lambda(u) + \cR' |s|^2
\nonumber\\
\end{eqnarray}
$$S^l = s^l + P_{j\bar lq}(z)(u^j-z^j) s^q, $$
and the remaining term $\cR'$ is of the order 
$  O(|z_0|^2\log\delta^{-1}) $.
\smallskip
\item[(2)] The global lower bound of the Hessian at any point $p\in X$ can be estimated as  
$$  i\ddbar\Phi_{\delta} \geq  - \Big(1+ O(\delta^2\log\delta^{-1}) + \lambda_{r} \Big)\omega,$$
where $\lambda_r > 0$ is decreasing to a limit $\lambda_{\infty}(p)$ while $r\rightarrow 0$.  
Moreover, this limit $\lambda_{\infty}$ is a constant multiple of the Lelong number $\nu_{\varphi}(p)$, 
and the infimum of the curvature tensor of the metric.
\smallskip
\item[(3)] The global upper bound of the Hessian is also determined by equation (\ref{rp-thm-002}),
except that the remaining term $\cR'$ is of the order  $O(\delta^2(\log\delta^{-1})^3)$. 
\end{enumerate}

\end{theorem}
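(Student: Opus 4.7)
The proof weaves together three threads: the Hessian computation of the local twisted convolution $\tilde\phi_j$, the gluing of the pieces via the regularized maximum $\cM_\tau$, and a careful choice of the twist $h(\delta,z)|z|^2$ so that the local formula survives the global assembly. I would present them in this order.

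\textit{Step 1 (part (1), local formula near a center).} On the coordinate chart $B(2\delta)$ associated to $U''_j$, I would change variables in (\ref{rp-004}) by setting $u = z + rA^{-1}(z)\eta$, so that $\tilde\phi_j(z,r)$ becomes an integral of $\phi_j(u)$ against a kernel depending on $z$ through $A(z)^{-1}$ together with a Jacobian factor of shape $r^{-2n}|\det A(z)|^2$. Applying $\d^2/\d z^l\d\bar z^m$ and transferring the derivatives onto $\phi_j$ by integration by parts produces the main term $r^{-2n}\int \chi' \cdot \d^2\phi \cdot S^l\bar S^m\, d\lambda(u)$ in (\ref{rp-thm-002}), with the tensor $P_{j\bar lq}(z)$ in the definition of $S^l$ arising exactly from the first derivative of $A^{-1}(z)$. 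The second derivatives of $A(z)$, combined with the Taylor expansion (\ref{rp-002}) of $g$, produce respectively the $\cH_{q\bar lm\bar p}$ term and the curvature term $c_{j\bar kl\bar m}-\cE$ in (\ref{rp-thm-002}). Third- and higher-order pieces of $A(z)$ and the Jacobian, together with $i\ddbar(h(\delta,z)|z|^2)$, are shown to be of order $|z_0|^2\log\delta^{-1}$ and absorbed into $\cR'$; the logarithm tracks the weight $\chi'$ once the derivatives land on the cutoff. Property (b') of the covering makes $V_j$ disjoint from every $U''_k$ with $k\neq j$, so on $V_j$ the regularized maximum $\cM_\tau$ reduces to the single function $\Phi_j$ and the local formula is automatically the Hessian of the global $\Phi_\delta$.

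\textit{Step 2 (parts (2) and (3), global bounds).} For the lower bound I would use that $\cM_\tau$ is monotone in Hessian lower bounds, reducing the problem to estimating $i\ddbar\tilde\phi_j$ on each $U''_j$. Invoking the same integration-by-parts identity from Step 1, but now at an arbitrary point, together with $i\ddbar\phi\geq -\omega$, yields $i\ddbar\tilde\phi_j\geq -(1+O(\delta^2\log\delta^{-1})+\lambda_r)\omega$, where $\lambda_r\searrow\lambda_\infty$ is the standard Kiselman--Demailly defect produced by the Lelong number and the infimum of the curvature. The twist contributes an additional $o(\delta^2)$ term absorbable into the slack. For the upper bound I would again use the local formula of Step 1, but now with $z_0$ ranging over all of $X$: the worst-case $|z_0|^2$ is of order $\delta^2$, and the two extra logarithmic factors come from differentiating $\cM_\tau$ twice when transitioning between patches, where the regularization parameter $\tau$ must be taken on the scale of the Hessian estimate itself, contributing $|D^2\cM_\tau|=O(\tau^{-1})$.

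\textit{Main obstacle.} The hardest point, deferred to Sections (\ref{sec-ltg}) and (\ref{sec-hessian}), is to construct the twist $h(\delta,z)|z|^2$ so that the gluing inequality (\ref{rp--006}) is strictly satisfied on every overlap $U''_j\cap U''_k$, while the complex Hessian of $h(\delta,z)|z|^2$ still fits inside the error budget $O(\delta^2(\log\delta^{-1})^3)$ of part (3). The delicate balance is that $h$ must be positive and of order $\delta^2$ near $\d U''_j$ so that the boundary trace of $\tilde\phi_j$ is pushed strictly below the interior values of neighboring $\tilde\phi_k$, yet small and smooth enough near the center $p_j$ that parts (1) and (2) remain clean. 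The scaling $r=O(\delta^3)$ is forced by this compatibility: the local convolution error is of order $r^2\log r^{-1}=O(\delta^6\log\delta^{-1})$, much smaller than the twist scale $\delta^2$, so that the boundary comparison is decided by $h|z|^2$ alone and the interior Hessian remains governed by the formula (\ref{rp-thm-002}). Verifying that the three scales $(r,\delta,h)$ can be coordinated consistently on the finite covering $\{U''_j\}$, and then bootstrapping the pointwise convergence $\Phi_\delta\to\phi$, is where the main technical work lies.
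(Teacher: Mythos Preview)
Your outline follows the paper's architecture closely: local Hessian via integration by parts (Section~\ref{sec-hessian}), gluing via the regularized maximum (Sections~\ref{sec-ltg} and~5), and the twist $h(\delta,z)|z|^2$ mediating between them. However, several of your scale computations are off in ways that would break the argument if implemented as written.

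First, the overlap error $|\tilde\phi_j-\tilde\phi_k|$ is not $O(r^2\log r^{-1})$. The dominant contributions come from the mismatch of the two normal frames on $U''_j\cap U''_k$: the paper shows (Lemmas~\ref{hd-lem-002} and~\ref{hd-lem-005}) that $A\cdot\d\tau$ agrees with a unitary rotation of $B$ only up to $O(\delta^4)$, and that the second derivatives of the transition map are of size $O(\delta)$. These feed into the comparison (\ref{hd-011})--(\ref{hd-022}) to give $|\tilde\phi_j-\tilde\phi_k|=O\big((\delta r+\delta^4+r^2)\log r^{-1}\big)$. With $r=O(\delta^3)$ this is $O(\delta^4\log\delta^{-1})$, the \emph{same} order as the twist $h|z|^2$ at the boundary, not ``much smaller''. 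The gluing inequality (\ref{rp--006}) is therefore won only by a constant factor (equation~(\ref{hd-028})), and this is precisely what forces $h(\delta)=\delta^2\log\delta^{-1}$ rather than $\delta^2$; the $O(\delta^2\log\delta^{-1})$ loss in part~(2) comes directly from $\ddbar(h|z|^2)$.

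Second, the logarithm in $\cR'$ does not come from derivatives landing on the cutoff $\chi'$. After all the integration by parts in Section~\ref{sec-hessian} the residual kernel $\cR'(z,v)$ is pointwise bounded, and the $\log\delta^{-1}$ arises from the uniform estimate $\fint_{B(r)}(-\phi)\leq C\log r^{-1}$ supplied by the bounded Lelong numbers (inequality~(\ref{ip-002})).

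Third, for part~(3) you are right that $|D^2\cM_\tau|=O(\tau^{-1})$ contributes one extra logarithm, but the other two come from a first-derivative bound you do not mention: the paper proves $|\d(\tilde\phi_j-\tilde\phi_k)|=O(\delta\log\delta^{-1})$ in Section~5.2, a non-obvious estimate requiring a separate smoothing argument together with the gradient integrability~(\ref{rmo-009}). With the choice $\tau=(\log\delta^{-1})^{-1}$, the extra term in~(\ref{rmo-002}) is then $\tau^{-1}\,|\d(\Phi_j-\Phi_k)|^2=O\big(\delta^2(\log\delta^{-1})^3\big)$.
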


The explicit formulas of the tensors $P_{j\bar lq}, \cH, \cE$ can be found in equation (\ref{ip-001}) and (\ref{ip-008}):
the tensor $P_{j\bar lq}$ is a polynomial of $z$ up to order $4$, $\cH_{q\bar lm\bar p}$ is of the order $|z|^2$,
and $\cE$ is of the order $|z|$.

Comparing with previous works (\cite{Dem83}, \cite{Dem92}, \cite{Dem97}), 
our higher order terms and the remaining term $\cR'$ enjoy a new feature:  
they converge to zero faster and faster when the point $z_0$ is closer and closer to the center $p_j$. 
In particular, we have the following 

\begin{corollary}
\label{cor-001}
For any $\omega$-psh function $\phi$ on $X$, there exists a $\delta$-net $\cP(\delta)$ of the manifold 
and a sequence of smooth functions $\Phi_{\delta}$ converging pointwise to $\phi$, such that $\Phi_\delta$ is globally quasi-$psh$ as 
$$i\ddbar \Phi_{\delta} \geq - \Big(1+ O(\delta^{2}\log\delta^{-1}) + \lambda_r \Big)\omega,$$ 
and its complex Hessian can be computed as 
\begin{equation}
\label{rp-thm-003}
\frac{\d^2 \Phi_{\delta}}{\d z^l \d \bar z^m}  = \frac{1}{r^{2n}} \int \chi' \frac{\d^2\phi}{\d u^l \d \bar u^m } d\lambda(u)
- \frac{1}{r^{2n-2}} \int \chi \frac{\d^2\phi}{\d u^k \d\bar u^j}  c_{j\bar kl\bar m }   d\lambda(u), 
\end{equation}
at each point $p \in \cP(\delta)$. 
\end{corollary}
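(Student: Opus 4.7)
The plan is to deduce Corollary \ref{cor-001} as a direct specialization of Theorem \ref{thm-002} at the centers of the net $\cP(\delta)$; no new analytic input is required beyond the theorem itself. First, I would invoke Theorem \ref{thm-002} to produce the smooth family $\Phi_{\delta}$ converging pointwise to $\phi$, together with the $\delta$-net $\cP(\delta) = \{p_j\}$ and the surrounding coordinate balls $V_j \cong B(\delta/10)$. The global quasi-$psh$ lower bound on $i\ddbar \Phi_{\delta}$ required by the corollary is then precisely statement $(2)$ of the theorem.

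Next, I would fix an arbitrary center $p_j \in \cP(\delta)$ and evaluate formula (\ref{rp-thm-002}) in the normal chart around $p_j$ at $z_0 = 0$, exploiting the fact that each correction term in that formula is designed to vanish at this origin. The higher-order tensor $\cH_{q\bar lm\bar p}$, being of order $|z|^2$, kills the second integral entirely; the tensor $\cE$, being of order $|z|$, drops out of the third integral and leaves only the bare curvature term $c_{j\bar kl\bar m}$; the remainder $\cR'$, being of order $|z_0|^2 \log\delta^{-1}$, contributes nothing at $z_0 = 0$; and the polynomial $P_{j\bar lq}(z)$ has no constant term in its explicit expression (\ref{ip-001}), so that $S^l|_{z=0} = s^l$ and the first integral collapses to the main convolution term displayed in (\ref{rp-thm-003}). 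Collecting these four reductions yields (\ref{rp-thm-003}) at every $p_j$, uniformly in $j$.

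Since the entire argument is a pointwise algebraic specialization, the main obstacle is simply to confirm that the explicit formulas (\ref{ip-001}) and (\ref{ip-008}) coming from Section \ref{sec-hessian} actually display the claimed vanishing orders at the origin of each normal chart --- in particular the vanishing $P_{j\bar lq}(0) = 0$, which is the only ingredient not already made explicit in the statement of Theorem \ref{thm-002}. Once these expressions are accepted, the corollary contains no further analytic content beyond what is already established in the theorem, and pointwise convergence $\Phi_\delta \to \phi$ as $\delta \to 0$ is inherited without change.
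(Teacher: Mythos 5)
Your proposal is correct and is essentially the paper's own (implicit) argument: the paper derives the corollary by remarking that the tensors $P_{j\bar lq},\ \cH_{q\bar lm\bar p},\ \cE$ and the remainder $\cR'$ are all $O(|z|)$ or higher near the origin of each normal chart, so that evaluating (\ref{rp-thm-002}) at $z_0=0$ collapses it to (\ref{rp-thm-003}), with the lower bound carried over verbatim from statement (2) of Theorem \ref{thm-002}. Your extra check that $P_{j\bar lq}(0)=0$ from the explicit expression in (\ref{ip-001}) is exactly the one ingredient the theorem's statement leaves implicit, and it holds because every summand in $P_{j\bar lq}$ carries at least one $z$ or $\bar z$ contraction under the paper's index-omission convention.
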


In fact, the crucial ingredient in this method is that there exists one K\"ahler form $\omega$ on $X$. 
First we would like to point out that the same regularization holds
if we assume that $\phi$ is a $\omega_0$-$psh$ functions,
where $\omega_0$ is an arbitrary K\"ahler form on $X$(not necessary being in the same class with $\omega$). 

More generally, this technique works for any $\Gamma$-$psh$ function $\varphi$ on a K\"ahler manifold,
where $\Gamma$ is any continuous closed real $(1,1)$ form on $X$. 
In this case, we can utilize a trick developed in Demailly \cite{Dem92} as follows.
First, there exists a homogeneous quadratic function $\gamma_j$ on each ball $\tilde{U}_j$,
such that $\phi_j: = \varphi_j + \gamma_j$ is $psh$ on $\tilde{U}_j$. 
Then the convolution $\tilde\phi_j$ defined as before is $psh$ on each $U''_j$, and we put 
$$ \Psi_j (z,r): = \tilde\phi_j - \gamma_j - h(z,\delta) |z|^2.  $$
Comparing $\Psi_j$ with $\Phi_j$, it is easy to see that 
\begin{equation}
\label{rp-add-010}
\Psi_j - \Phi_j = \tilde\gamma - \gamma = O(r),
\end{equation}
where $\tilde\gamma$ is the convolution taken on $\gamma$. 
Then the complex Hessian of this difference can also be estimate from equation (\ref{rp-thm-002}) as 
\begin{equation}
\label{rp-add-011}
\ddbar (\tilde\gamma - \gamma) = O(r) + O(|z_0|^2 \log\delta^{-1}). 
\end{equation}

The extra term $\tilde\gamma - \gamma$ causes no harm to our estimates in Theorem (\ref{thm-002}) if we put $r= O(\delta^3)$.
Namely, we can still glue $\{ \Psi_j (z,r )\}$ to a global quasi-$psh$ function $\Phi_{\delta}$ as before,
and the complex Hessian has a similar formula (with an extra term $\ddbar (\tilde\gamma - \gamma)$) 
as equation (\ref{rp-thm-002}) locally. This time, the global lower bound of the complex Hessian of $\Phi_\delta$ is changed to 
\begin{equation}
\label{rp-add-012}
\ddbar \Phi_{\delta} \geq -\Gamma - \Big( O(\delta^2\log\delta^{-1}) + \lambda_r \Big)\omega,
\end{equation}
and our Theorem (\ref{thm-002}) and Corollary (\ref{cor-001}) follows in this case.


\section{From local to global}
\label{sec-ltg}
Since we first regularize our (quasi-)$psh$ function $\phi$ locally as in equation (\ref{rp-004}), 
the remaining issue is to glue two pieces together,
by a well known technique involving taking maximum on each intersection point going back to Richberg. 
This requires to estimate the difference between $\tilde\phi_j$ and $\tilde\phi_k$ on the intersection $U''_j\cap U''_k$. 

\subsection{local analysis}
Suppose two coordinate balls $U''_j$ and $U''_k$ intersect with each other. 
Consider the trivialization on a slight larger ball as $\tau_j : \tilde U_j\rightarrow B(2\delta)$
and  $\tau_k: \tilde U_k\rightarrow B(2\delta)$. 
In order to distinguish them, we will use 
$\{z\}$-coordinate on the ball $\tilde{U}_j$, 
and $\{w\}$-coordinate on $\tilde{U}_k$.
Therefore, we have $\tau_j(p)= z$ and $\tau_k(p) = w$ for any point $p\in U_j\cap U_k$.
Then there exists a bioholomorphic map $ \tau: = \tau_j \circ \tau^{-1}_k$ from $w$-ball to $z$-ball,
and we also write $z: = z(w) = \tau (w)$ as a function of $w$.
Now the following transition relation (written in $U_k$ coordinate) holds
\begin{equation}
\label{hd-001}
g^{(j)}_{\mu\bar\nu} (z) \frac{\d z^{\mu}}{\d w^{\a}}\frac{\d\bar z^{\nu}}{\d\bar w^\b} = g^{(k)}_{\a\bar\b} (w). 
\end{equation}
Incorporating with the Taylor expansion (\ref{rp-002}), we have 
\begin{eqnarray}
\label{hd-002}
&&(\delta_{\mu\nu} - c^{(j)}_{p\bar q \mu\bar\nu} z^p \bar z^q 
+ e^{(j)}_{\mu\bar\nu p \bar q r}z^{p}\bar z^{q}z^{r} 
+ e^{(j)}_{\mu\bar\nu \bar q r \bar s}\bar z^{q}z^{r}\bar z^{s} + O(|z|^4) ) (\d\tau)^{\mu}_{\a}(\d\tau)^{\bar\nu}_{\bar\b}
\nonumber\\
 &=& \delta_{\a\b} - c^{(k)}_{p\bar q \a \bar\b}w^p \bar w^q + e^{(k)}_{\a\bar\b p \bar q r}w^{p}\bar w^{q}w^{r} 
 + e^{(k)}_{\a\bar\b\bar q r \bar s}\bar w^{q}w^{r}\bar w^{s} + O(|w|^4).
 \nonumber\\
\end{eqnarray}
Let $\mathcal{O}(\delta)$ denote any complex valued matrix whose all coefficients are of the order $\delta$, 
and then equation $(\ref{hd-002})$ can be written as 
\begin{eqnarray}
\label{hd-003}
\{ A^*(z)A(z) + \mathcal{O}(|z|^4) \}_{\mu\bar\nu} (\d\tau)^{\mu}_{\a}(\d\tau)^{\bar\nu}_{\bar\b} 
\nonumber\\
= \{ B^*(w)B(w) + \mathcal{O}(|w|^4) \}_{\a\bar\b},
\end{eqnarray}
or 
\begin{equation}
\label{hd-004}
(A\cdot \d\tau)^*(A\cdot \d\tau)  = B^*B  +  \mathcal{O}(\delta^4). 
\end{equation}
Since we know $A = I + \mathcal{O}(|z|^2)$ and $B = I +\mathcal{O}(|w|^2)$, 
equation (\ref{hd-004}) implies that we have 
\begin{equation}
\label{hd-006}
\d\tau^* \cdot \d\tau = I + \cO(\delta^2) + \d\tau^* \cO(\delta^2) \d\tau + \cO(\delta^3).
\end{equation}
Suppose the polar decomposition of this matrix is $\d\tau^* = U\cdot P$,
where $U$ is an unitary group and $P$ is semi-positive hermitian. 
Notice that $\d\tau$ are uniformly bounded matrices thanks to the fixed geometry of $X$. 
Then the hermitian part $P$ is also of the form $I + \cO(\delta^2)$ by equation (\ref{hd-006}).
And we claim that such matrices are commutable up to higher order terms. 

\begin{lemma}
\label{hd-lem-001}
If two matrices $A, B$ are of the form $I+\cO(\delta^2)$, then we have $[A,B] = \cO(\delta^4)$. 
\end{lemma}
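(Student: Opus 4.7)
The plan is to reduce the commutator to a difference of two products of the perturbation parts and to observe that each such product is already of order $\delta^4$. More precisely, I would write
$$ A = I + \alpha, \qquad B = I + \beta, $$
where $\alpha$ and $\beta$ are matrices whose entries are $\cO(\delta^2)$. Direct multiplication gives
$$ AB = I + \alpha + \beta + \alpha\beta, \qquad BA = I + \alpha + \beta + \beta\alpha, $$
and hence
$$ [A,B] = AB - BA = \alpha\beta - \beta\alpha. $$
Each entry of $\alpha\beta$ is a finite sum of products of entries of $\alpha$ and entries of $\beta$, each factor being $\cO(\delta^2)$, so every entry of $\alpha\beta$ is $\cO(\delta^4)$, and the same holds for $\beta\alpha$. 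Subtracting keeps us in the $\cO(\delta^4)$ regime.

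There is essentially no obstacle here: the statement is a purely algebraic consequence of the fact that the leading term $I$ is central, so only the cross terms between the two perturbations survive in the commutator, and these cross terms are automatically of order $\delta^2\cdot\delta^2=\delta^4$. The only thing worth emphasizing is that the constants implicit in the $\cO$-notation depend only on the dimension $n$ (through the number of terms in the matrix product) and on the uniform constants controlling the entries of $\alpha$ and $\beta$; both are harmless in the intended application to $A(z)$, $B(w)$, and $\d\tau$, whose sizes are controlled uniformly by the fixed background geometry of $X$.
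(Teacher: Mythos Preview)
Your proof is correct and is essentially identical to the paper's own argument: the paper also writes $A=I+P$, $B=I+Q$ with $P,Q=\cO(\delta^2)$, expands $AB$, and concludes $[A,B]=[P,Q]=\cO(\delta^4)$. Your additional remark about the uniformity of the implied constants is a harmless (and helpful) elaboration.
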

\begin{proof}
Writing the two matrices as $A = I + P$ and $B=I+Q$ where $P, Q$ are both of the type $\cO(\delta^2)$, 
we have 
\begin{equation}
\label{hd-007}
A\cdot B = I + P + Q + P\cdot Q,
\end{equation}
and then the commutator is 
\begin{equation}
\label{hd-008}
[A,B] = P\cdot Q - Q\cdot P = [P,Q] = \cO(\delta^4).
\end{equation}
\end{proof}
Then we can further prove the following 
\begin{lemma}
\label{hd-lem-002}
Suppose equation (\ref{hd-004}) holds, and then there exists an unitary group $V$, such that we have 
\begin{equation}
\label{hd-0075}
 A\cdot \d\tau - V\cdot B = O(\delta^4)
\end{equation}
\end{lemma}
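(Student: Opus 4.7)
My plan is to reduce the lemma to a comparison of the polar decompositions of $M := A\cdot \d\tau$ and $B$. I would begin by noting that the hypothesis (\ref{hd-004}) reads $M^*M = B^*B + \cO(\delta^4)$, and that the Taylor expansions of $A$, $B$ together with (\ref{hd-006}) place both $M^*M$ and $B^*B$ in $I + \cO(\delta^2)$; in particular both $M$ and $B$ are invertible, so their polar decompositions $M = V_1\,|M|$ and $B = V_2\,|B|$ are honest factorizations with $V_1, V_2$ unitary and $|M|, |B|$ positive Hermitian, each lying in $I + \cO(\delta^2)$ by functional calculus.

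The heart of the proof is to upgrade the bound $|M|^2 - |B|^2 = \cO(\delta^4)$ on the squared moduli to the same bound $|M| - |B| = \cO(\delta^4)$ on the moduli themselves. I would do this via the Sylvester-type identity
\begin{equation*}
|M|\cdot D + D\cdot |B| \,=\, |M|^2 - |B|^2, \qquad D := |M| - |B|,
\end{equation*}
combined with the observation that since $|M|, |B|\in I+\cO(\delta^2)$, the linear endomorphism $D\mapsto |M|\,D + D\,|B|$ on the space of matrices differs from $2\cdot \mathrm{Id}$ by an operator of norm $\cO(\delta^2)$. Hence for $\delta$ small this map is invertible with inverse of operator norm close to $\tfrac{1}{2}$, and inverting it gives $D = \cO(\delta^4)$ as desired.

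Once this is in hand, the lemma drops out in one line. Set $V := V_1 V_2^{-1}$, which is unitary as a product of unitaries, and compute
\begin{equation*}
A\cdot \d\tau - V\cdot B \,=\, V_1\,|M| - V_1 V_2^{-1}\cdot V_2\,|B| \,=\, V_1\bigl(|M|-|B|\bigr) \,=\, \cO(\delta^4),
\end{equation*}
where the last step uses that $V_1$ is an isometry.

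The main obstacle I expect is precisely the Sylvester step: one must know that $|M|$ and $|B|$ are each individually $\cO(\delta^2)$-close to the identity, not merely close to each other, in order to invert the map $D\mapsto |M|D + D|B|$ without losing powers of $\delta$. This is exactly where the preceding analysis enters, in particular the fact (a consequence of (\ref{hd-006}) and Lemma \ref{hd-lem-001}) that $\d\tau$ itself has positive Hermitian polar part of the form $I+\cO(\delta^2)$, so that $A\cdot \d\tau$ is truly a small perturbation of a unitary. Modulo that point, everything else is clean linear algebra.
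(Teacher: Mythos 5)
Your proof is correct, and it takes a genuinely different route from the paper's. The paper first reduces to the case where $A$ and $B$ are Hermitian, rewrites (\ref{hd-004}) via the polar decomposition $\d\tau = PU^*$ as $PA^2P - U^*B^2U = \cO(\delta^4)$, introduces $H := U^*BU$ and $Q := AP$, expands both as $I + (\text{order } \delta^2) + (\text{order }\delta^3) + \cO(\delta^4)$, squares them, and matches the second- and third-order coefficients term by term to conclude $H - Q = \cO(\delta^4)$; the general (non-Hermitian) case is then handled by a second round of polar decompositions. Your argument instead works directly with $M := A\cdot\d\tau$ and its polar decomposition, and replaces the order-by-order matching with a single inversion of the Sylvester operator $D \mapsto |M|D + D|B|$, using that this operator is a $\cO(\delta^2)$-perturbation of $2\,\mathrm{Id}$ to pass from $|M|^2 - |B|^2 = \cO(\delta^4)$ to $|M| - |B| = \cO(\delta^4)$ without losing powers of $\delta$. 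What your approach buys is cleanliness: no Hermitian reduction, no explicit expansion of $H$ and $Q$, and a quantitative statement (the operator norm of the inverse Sylvester map is $\tfrac12 + \cO(\delta^2)$) that is transparent. What the paper's approach buys is explicitness: the term-by-term matching exhibits exactly which coefficients of the two expansions agree, which could matter downstream when one wants concrete formulas for the low-order terms, as the paper does elsewhere in Section \ref{sec-hessian}. You also correctly isolate the crux — that $|M|$ and $|B|$ must each individually lie in $I + \cO(\delta^2)$, a fact supplied by (\ref{hd-006}) and Lemma \ref{hd-lem-001} — which in the paper plays the analogous role of justifying the term-by-term comparison.
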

\begin{proof}
First we assume $A$ and $B$ are Hermitian matrices. 
Recall that the transition matrix $\d\tau$ decomposes into $PU^*$,
and then we can re-write equation (\ref{hd-004}) as 
\begin{equation}
\label{add-001}
PA^2P - U^*B^2U = \cO(\delta^4). 
\end{equation}
Put another Hermitian matrix $H$ as $H: = U^* B U$, 
and then it is easy to see that the matrix $H$ is also of the form $I + \cO(\delta^2)$.
Thanks to Lemma (\ref{hd-lem-002}), if we put $Q:= AP $, then $Q = I + \cO(\delta^2)$ is almost Hermitian up to a order of $\delta^4$, i.e. 
$$ Q^* = Q + \cO(\delta^4), $$

Then we can expand the the following two matrices as 
$$H = I + H_0 + H_1 + H_2;$$
$$Q = I + Q_0 +Q_1 + Q_2,$$ 
where $H_0(Q_0)$ is the term of order $\delta^2$ in the expansion of $H(Q)$, $H_1(Q_1)$ is the third order terms and $H_2, Q_2$ are of order $\cO(\delta^4)$.
Notice that $H_0, H_1, Q_0, Q_1$ are all Hermitian matrices. 
Then we can compare the matrix 
\begin{eqnarray}
\label{add-002}
H^*H &=& (I + H_0+  H_1 + H_2 )^2
\nonumber\\
&=& I + 2H_0 + 2H_1 + \cO(\delta^4)
\end{eqnarray}
with
$$Q^*Q = I + 2Q_0 + 2Q_1 + \cO(\delta^4). $$
Thanks to equation (\ref{hd-004}),
there is no second or third order terms in their difference,
and then we must have $H_0 = Q_0$ and $H_1 = Q_1$. Therefore, we have 
\begin{equation}
\label{add-003}
H - Q  = \cO(\delta^4),
\end{equation}
and we proved equation (\ref{hd-0075}) when $A$ and $B$ are both Hermitian. 

In the general case, we use polar decomposition again to put two semi-positive Hermitian matrices as
$$\tilde A = U_1 \cdot A;\ \ \ \ \tilde B = U_2\cdot B,$$
where $U_1$ and $U_2$ are unitary. Notice that we still have 
$$ (\d\tau)^* \tilde A^* \tilde A (\d\tau) -   \tilde B^*\tilde B = O(\delta^4),$$
and the same argument as before implies the following estimate 
\begin{equation}
\label{hd-0076}
U_1\cdot A\cdot \d\tau -  \tilde V\cdot U_2\cdot B = O(\delta^4),
\end{equation}
where $\tilde V$ is another unitary matrix. 
Then our result follows by putting $V = U_1 \tilde V U_2 $.

\end{proof}

For later use, we shall also investigate the size of the derivatives of the matrix $\d\tau$.
\begin{lemma}
\label{hd-lem-005}
We have the following local estimates in the intersection $U_j\cap U_k$ for each indices $\a, ,\lambda, \gamma$
\begin{equation}
\label{add-0003}
\frac{\d z^{\a}}{\d w^{\lambda} \d w^{\gamma}} = O(\delta); \ \ \ \  \frac{\d w^{\a}}{\d z^{\lambda} \d z^{\gamma}} = O(\delta).
\end{equation}
\end{lemma}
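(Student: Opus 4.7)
The plan is to differentiate the metric transition identity (\ref{hd-001}) in the holomorphic variable $w^{\gamma}$ and to exploit the fact that, in normal coordinates at both centers, the first derivatives of the two metric matrices are $O(\delta)$ throughout the overlap. Since $\tau=\tau_j\circ\tau_k^{-1}$ is holomorphic, the mixed second derivatives $\partial^2 z^{\alpha}/\partial w^{\lambda}\partial\bar w^{\gamma}$ vanish identically, so only the pure holomorphic second derivatives named in the statement need attention.

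Applying $\partial/\partial w^{\gamma}$ to (\ref{hd-001}) yields
$$\partial_{\rho} g^{(j)}_{\mu\bar\nu}(z)\,\frac{\partial z^{\rho}}{\partial w^{\gamma}}\,\frac{\partial z^{\mu}}{\partial w^{\alpha}}\,\overline{\frac{\partial z^{\nu}}{\partial w^{\beta}}} + g^{(j)}_{\mu\bar\nu}(z)\,\frac{\partial^2 z^{\mu}}{\partial w^{\alpha}\partial w^{\gamma}}\,\overline{\frac{\partial z^{\nu}}{\partial w^{\beta}}} = \frac{\partial g^{(k)}_{\alpha\bar\beta}}{\partial w^{\gamma}}(w).$$
The normal-form expansion (\ref{rp-002}) gives $\partial g^{(j)}=O(|z|)$ and $\partial g^{(k)}=O(|w|)$, both of size $O(\delta)$ on $U''_j\cap U''_k\subset B(2\delta)$; combined with the uniform bound on $\partial\tau$ coming from (\ref{hd-006}) (where the Jacobian is shown to be of the form $I+\cO(\delta^2)$ up to a unitary factor), the identity above collapses to
$$g^{(j)}_{\mu\bar\nu}(z)\,\frac{\partial^2 z^{\mu}}{\partial w^{\alpha}\partial w^{\gamma}}\,\overline{\frac{\partial z^{\nu}}{\partial w^{\beta}}}=O(\delta).$$

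To finish, I view the coefficient in front of the unknown second derivative as a matrix in the indices $(\mu,\bar\beta)$ with $(\alpha,\gamma)$ fixed: it equals $g^{(j)}(z)\cdot\overline{\partial\tau}^{T}=I+\cO(\delta^2)$, so it has uniformly bounded inverse. Multiplying through by this inverse isolates $\partial^2 z^{\alpha}/\partial w^{\lambda}\partial w^{\gamma}=O(\delta)$, and the symmetric estimate for $\partial^2 w^{\alpha}/\partial z^{\lambda}\partial z^{\gamma}$ follows by interchanging the roles of the two normal charts. The only subtle step is the invertibility of the coefficient matrix in the final stage, but this is already built into the polar-decomposition analysis of (\ref{hd-006}) and Lemma \ref{hd-lem-001}, so I do not expect any genuine obstacle.
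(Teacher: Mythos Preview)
Your argument is essentially identical to the paper's: differentiate the transition identity (\ref{hd-001}) in $w^{\gamma}$, use the normal-coordinate expansion to bound $\partial g^{(j)}$ and $\partial g^{(k)}$ by $O(\delta)$, and then strip off the coefficient matrix using the structure of $\partial\tau$ established in (\ref{hd-006}). One small slip: the matrix $g^{(j)}(z)\cdot\overline{\partial\tau}^{T}$ is not $I+\cO(\delta^2)$ but rather a unitary matrix times $I+\cO(\delta^2)$ (the unitary factor from the polar decomposition survives); however, your actual conclusion---that it has uniformly bounded inverse---is correct and is all that is needed.
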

\begin{proof}
It is enough to prove one of the estimates. 
By differentiating equation (\ref{hd-001}) with respect to $w$ variables, we have 
\begin{eqnarray}
\label{add-004}
&&\frac{\d g^{(j)}_{\mu\bar\nu} }{\d z^{\lambda}} \frac{\d z^{\lambda}}{\d w^{\gamma}}
\frac{\d z^{\mu}}{\d w^{\a}} \frac{\d \bar z^{\nu}}{\d\bar w^{\b}}
+ g^{(j)}_{\mu\bar\nu} \frac{\d^2 z^{\mu}}{\d w^{\a} \d w^{\gamma}} \frac{\d\bar z^{\nu}}{\d \bar w^{\b}} 
= \frac{\d g^{(k)}_{\a\bar\b}}{\d w^{\gamma}},
\end{eqnarray}
But notice that the size of $\d g$ is controlled by  
\begin{equation}
\label{add-005}
\frac{\d g_{\a\bar\b}}{ \d z^{\gamma}} = - c_{\a\bar\b\gamma\bar\lambda}\bar z^{\lambda} + O(|z|^2) = O(\delta). 
\end{equation}
Therefore, we have 
\begin{equation}
\label{add-005}
g^{(j)}_{\mu\bar\nu} \frac{\d^2 z^{\mu}}{\d w^{\a} \d w^{\gamma}} \frac{\d\bar z^{\nu}}{\d \bar w^{\b}}  = O(\delta),
\end{equation}
and then our result follows since we know $g_{\mu\bar\nu} = \delta_{\mu\nu} + O(\delta^2)$ and $\d\tau =  ( I + \cO(\delta^2) )\cdot U$
for some unitary group $U$. 
\end{proof}

Let us compare the two convolutions $\tilde\phi_j$ and $\tilde\phi_k$ at an intersection point $p\in U_j \cap U_k$,
where $\tau_j(p)=z\in U_j$, $\tau_k(p)=w\in U_k$.
First notice that we can re-write the integral as 
\begin{equation}
\label{hd-009}
\tilde\phi_k (w,r)= \int_{\z\in\bC^n} \phi_j\circ \tau (w + rB^{-1}(w)\cdot \z) \chi'(|\z|^2) d\lambda(\z).
\end{equation}
Notice that the volume form $\chi' d\lambda$ is a $S^{2n-1}$-invariant measure, and then we have 
\begin{equation}
\label{hd-010}
\tilde{\phi}_j (z,r) = \int_{\z\in\bC^n} \phi_j (z + r A^{-1}(z)\cdot V \z)\chi'(|\z|^2)d\lambda(\z),
\end{equation}
where $V$ is the unitary matrix determined by $A(z)$ and $B(w)$ as in Lemma (\ref{hd-lem-002}).  
Then we can first compare the following distance between two points by putting $\tilde\z = B^{-1}\z$
\begin{eqnarray}
\label{hd-011}
&&\tau(w+B^{-1}\cdot\z) - \tau(w) - A^{-1}\cdot V\z
\nonumber\\
&=& \tau(w+ \tilde\z) - \tau(w) - \d\tau\cdot\tilde\z + (\d\tau\cdot B^{-1} -A^{-1}V ) \z
\nonumber\\
&=& O( |\z|^2 , \delta^4|\z|). 
\end{eqnarray}
This enable us to claim the following excepted estimate 
\begin{equation}
\label{hd-0011}
|\tilde\phi_j(z,r) - \tilde\phi_{k}(w,r) | \leq C (\delta r+O(\delta^4)+ r^2)\log r^{-1},
\end{equation}
for some uniform constant $C$.

In order to prove this, we first fix two points $z_0$ and $w_0$ such that $\tau (w_0) = z_0$.
Denote $\phi_{j,z_0}$ by a linear translation of $\phi_j$ in $z$-coordinate, i.e. 
$$ \phi_{j,z_0}(z): = \phi_j(z_0 + z),$$
and then we have for all $\z\in B(r)$
$$ \phi_k (w_0 +  B^{-1}(w_0)\cdot \z ) = \phi_{j,z_0} \{ \tau(w_0 + B^{-1}\z) - \tau(w_0)  \}. $$ 
Put another linear change of coordinates as $\eta = A^{-1}(z_0)V(z_0) \cdot \z$, 
and then we can define the following map by
$$ F(\eta): = \tau(w_0 + B^{-1}\z) - \tau(w_0), $$
Notice that $F(0)=0$, and it is a biholomorphic map between two balls centered at the origin whose radiuses have the order $r$. 

Thanks to Lemma (\ref{hd-lem-002}) again, the differential(on $\eta$ variable) of this map $F$ at the origin is 
\begin{equation}
\label{hd-012}
\d\tau(w_0) \cdot B^{-1} V^* A= I + \cO(\delta^4),
\end{equation}
and its second derivatives at the origin can be estimated by Lemma (\ref{hd-lem-005}) as 
\begin{equation}
\label{hds-0012}
\frac{\d^2 F^j}{\d \eta^k \d \eta^l }(0) =  O(\delta),
\end{equation}
for all $j, k, l$. 
Moreover, the inverse map of $F$ can be written down as 
$$f(u): = A^{-1}VB \cdot \{ \tau^{-1}( u+ z_0) - \tau^{-1}(w_0) \}.$$
It is easy to see that this inverse map enjoys the same properties as the function $F$,
and then we can change our convolutions as 

\begin{equation}
\label{hd-013}
\tilde\phi_j (z_0 ,r)= \frac{1}{r^{2n}}\int_{\eta\in\bC^n} \phi_{j,z_0} (\eta) \chi'\left(\frac{|\eta|^2}{r^2} \right) d\lambda(\eta),
\end{equation}
and 
\begin{equation}
\label{hd-014}
\tilde\phi_k (w_0 ,r )= \frac{1}{r^{2n}}\int_{\eta\in\bC^n} \phi_{j,z_0} (F(\eta) ) \chi'\left(\frac{|\eta|^2}{r^2} \right) d\lambda(\eta).
\end{equation}
Based on the uniform geometry of the manifold, the Taylor expansion of $f$ near the origin can be written as
\begin{eqnarray}
\label{hd-015}
 \eta^{p} &=& \frac{\d f^p}{\d u^q}(0) u^q + \frac{\d^2 f^p}{\d u^i \d u^q}(0) u^i u^q + O(u^3)
 \nonumber\\
 &=& u^p + \frac{\d^2 f^p}{\d u^i \d u^q}(0) u^i u^q + O(u^3, \delta^4 u)
\end{eqnarray}
Then we have 
\begin{equation}
\label{hd-016}
d\eta^p = du^p + 2\frac{\d^2 f^p}{\d u^i \d u^q}(0) u^i du^q + O(u^2,\delta^4),
\end{equation}
and the volume form is 
\begin{equation}
\label{hd-017}
d\lambda(\eta) = d\lambda(u) \left(1 +  4 \Re \frac{\d^2 f^p}{\d u^p \d u^q}(0) u^q + O(|u|^2, \delta^4) \right).
\end{equation}

Now if we compare equation (\ref{hd-014}) with the following integral 
\begin{equation}
\label{hd-018}
\tilde\psi (w_0,r) : = \frac{1}{r^{2n}}\int_{\eta\in\bC^n} \phi_{j,z_0}(F(\eta)) \chi' \left(\frac{|\eta|^2}{r^2}\right)d\lambda(u),
\end{equation}
then the difference will be controlled by an error like (here we assume $\phi<0$)
\begin{equation}
\label{hd-019}
C(\delta r +O(\delta^4) + r^2)\fint_{B(r)}(-\phi),
\end{equation}
for some uniform constant $C$. 
By a further change of variables, equation (\ref{hd-018}) transforms into 
\begin{equation}
\label{hd-020}
\tilde\psi(w_0,r) = \frac{1}{r^{2n}} \int_{u\in\bC^n} \phi_{j,z_0}(u) \chi'\left( \frac{|f(u)|^2}{r^2}\right) d\lambda(u). 
\end{equation}
Notice that we have the following Taylor expansion for the cut-off function 
\begin{eqnarray}
\label{hd-021}
&&\chi' (r^{-2} |f(u)|^2) - \chi' (r^{-2}|u|^2)
\nonumber\\
&=& r^{-2} \chi''(r^{-2}|u|^2) (|f(u)|^2 - |u|^2) + \frac{1}{2} r^{-4} \chi'''(r^{-2}|u^*|^2)(|f(u)|^2 - |u|^2)^2
\nonumber\\
&=& r^{-2} \chi'' \left( O(\delta^4)|u|^2 + 2\Re \frac{\d^2 f^p}{\d u^i \d u^q}(0) u^i u^q\bar u^p + O(|u|^4)\right) 
\nonumber\\
&+& r^{-4}\chi''' \Big( O(\delta^4) |u|^4 + O(|u|^6) \Big)
\nonumber\\
&=& O(\delta |u|, \delta^4).
\nonumber\\
\end{eqnarray}
Here the length of the vector $|u^*|$ is determined by the value of the function $f$ near the origin, 
but the absolute value of these derivatives of $\chi$ is always uniformly bounded. 
Therefore, the error between $\tilde\psi$ and $\tilde\phi_j$ is again controlled by 
\begin{equation}
\label{hd-022}
C(\delta r +O(\delta^4) + r^2 )\fint_{B(r)}(-\phi).
\end{equation}

However, the Lelong numbers $\nu_{\varphi}$ of the $\omega$-$psh$ function $\varphi$ 
is uniformly bounded on a compact K\"ahler manifold. 
Moreover, there exists a uniform constant $C_1>0$ such that we have 
\begin{equation}
\label{hd-023}
\fint_{B(r)}(-\phi) \leq - C_1 \log r,
\end{equation}
around each point $p\in X$. 
Suppose the radius $r$ of the convolution ball is also in the size of $\delta^3$,
then the difference between $\tilde\phi_j$ and $\tilde\phi_k$ is controlled by the following estimate
\begin{equation}
\label{hd-024}
|(\ref{hd-013}) - (\ref{hd-014})| \leq  - C_2 \delta (r + O(\delta^3)) \log r \leq -C_3 \delta^4\log\delta,
\end{equation}
for some uniform constant $C_2, C_3>0$.

\subsection{glueing}
\label{sec-sub-003}
We are going to glue things together to have a global quasi-plurisubharmonic function by taking maximum among all pieces.
Recall that our convolution $\tilde\phi_j$ is $\omega$-$psh$ on $U''_j$,
and then our glueing target is defined to be 
\begin{equation}
\label{hd-026}
\Phi_j (z,r): = \tilde\phi_j(z,r) - h(\delta,z)|z|^2.
\end{equation}

First we claim that if we pick the auxiliary function as 
$$h(\delta) = -\delta^2 \log\delta,$$
then $r = O(\delta^3)$ will make a successful glueing.
This is because we have 
\begin{eqnarray}
\label{hd-027}
&&\Phi_j(\tau(w),r) - \Phi_k(w,r) 
\nonumber\\
&=& h(\delta)(|w|^2 - |\tau(w)|^2) + O( -\delta r\log r),
\end{eqnarray} 
and if the point $p$ is approaching the boundary $\d U''_j$, the error $|\tau(w)|^2$ tends to a value which is larger than $1.5\delta^2$,
as we perturbed the boundary of the ball. Then we have 
\begin{eqnarray}
\label{hd-028}
&& \Phi_j(\tau(w),r) - \Phi_k(w,r) 
\nonumber\\
&\rightarrow& h(\delta) (|z|^2 - \frac{3}{2}\delta^2) + O(\delta r\log r^{-1})
\nonumber\\
&\leq& -\frac{1}{2}\delta^4\log\delta + O(\delta r\log r^{-1}). 
\end{eqnarray}
Therefore, this error becomes strictly negative when $r = C_5\delta^3$ for some uniform constant $C_5>0$,
and our claim is proved. 

Moreover, in order to cancel the perturbation caused by this auxiliary function near the center, 
we can introduce a cut-off function as follows. 
Let $\rho$ is a standard mollifier supported on the unit ball, 
such that $\rho(z) = 1$ for all $|z|\leq 1/10$, and $\rho(z) =0 $ for all $1/2 \leq |z| $.
Then put $\rho_{\delta} (z): = \rho(\delta^{-2} |z|^2)$,
and we have estimate on its derivatives as 
$$\d\rho_{\delta} = O(\delta^{-1});\ \ \  \ddbar \rho_{\delta} = O(\delta^{-2}), $$ 
on the ball $B(\sqrt{2}\delta)$. 
Therefore, we define the following auxiliary function 
$$\tilde h (z, \delta): = - \Big(1 - \rho_{\delta}(z) \Big) \delta^2 \log\delta.$$

This new choice of auxiliary function also gives a successful glueing. 
This is because $\tilde h(\delta,z) = h(\delta)$ 
when $z$ is close to the boundary $\d U''_j$, and $\tilde h(w,\delta) \leq h(\delta)$ for all $w\in U''_k$. 
Moreover, the whole twisted term $\tilde h(z,\delta)|z|^2$ is complete zero inside the ball $B(\delta/10)$.
Therefore, it contributes nothing to the complex Hessian of $\Phi_j$ in $V_j$. 

On the other hand, for all points outside $V_j$, i.e. $|z| \geq 1/10 $, we have 
\begin{eqnarray}
\label{hd-028}
\ddbar (\tilde h(\delta, z) |z|^2) &=&  h(\delta) \Big\{ |z|^2 \ddbar\rho_{\delta} + 2\Re(\d\rho_{\delta}\cdot |z|) + \rho_{\delta} \Big\}
\nonumber\\
&=& O(\delta^2\log\delta^{-1}),
\end{eqnarray}
and this gives the deserved error in the complex Hessian from the twisted boundary in Theorem (\ref{thm-002}).


\section{Hessian estimate}
\label{sec-hessian}
We are going to compute the complex Hessian of the local convolution $\tilde\phi_j$,
and this follows from a standard calculation (\cite{Dem83}).
However, we have to take care of higher order terms in the Taylor expansion since we need a better control of the error term.

\subsection{The commutator}
Recall that our convolution can be written as 
\begin{equation}
\label{hd-030}
\frac{1}{r^{2n}} \int_{\eta\in \bC^n} \phi(z + A^{-1}(z)\cdot\eta) \chi' \left(\frac{|\eta|^2}{r^2}\right)d\lambda(\eta).
\end{equation}
Put another variable $u: = z + v$, where we separate the variables by taking 
$v: = A^{-1}(z)\cdot \eta$,
and then we have the following Taylor expansion
\begin{equation}
\label{hd-031}
\eta^k = v^k - \frac{1}{2} c_{j\bar kl\bar p} z^{l} \bar z^p v^j + e_{j\bar k l\bar pr}z^l \bar z^p z^r v^j;
\end{equation}
\begin{equation}
\label{hd-032}
\bar\eta^j = \bar v^j - \frac{1}{2} c_{j\bar k l\bar p} z^{l} \bar z^p \bar v^k + e_{j\bar k \bar q l \bar s} \bar z^q z^l \bar z^s \bar v^k,
\end{equation}
where all holomorphic indices or anti-holomorphic indices in tensors $e_{\a\bar\b l\bar pr}$ and $e_{\a\bar \b \bar q l \bar s}$ are commutable.
Moreover, the complex conjugate of the tensor $e_{\a\bar \b l\bar p r}$ is $ e_{\b\bar \a \bar l p \bar r}$. 
Then we can change variables while fixing $z$
\begin{equation}
\label{hd-033}
d\eta^k = dv^k - \frac{1}{2} c_{j\bar k l\bar p}z^l\bar z^p dv^j + e_{j\bar k l\bar pr}z^l \bar z^p z^r dv^j,
\end{equation}
and 
\begin{eqnarray}
\label{hd-034}
d\eta^k\wedge d\bar\eta^k &=& dv^k\wedge d\bar v^k - \frac{1}{2}  c_{j\bar k l\bar p}z^l \bar z^p dv^j\wedge d\bar v^k - \frac{1}{2} c_{k\bar j l\bar p} z^l\bar z^p dv^k\wedge d\bar v^j
\nonumber\\
&+& \frac{1}{4} c_{j\bar k l\bar p} c_{k\bar qr\bar s} z^l z^r \bar z^{p} \bar z^s dv^j \wedge d\bar v^q
\nonumber\\
&+& e_{j\bar k l\bar pr}z^l \bar z^p z^r dv^j \wedge d\bar v^k 
+ e_{k \bar j \bar q l\bar s } \bar z^q z^l \bar z^s dv^k \wedge d\bar v^j
\nonumber\\
&-& \frac{1}{2} \Big( e_{j\bar k l\bar pr} c_{k\bar q i\bar n} z^l \bar z^p z^r z^i \bar z^n  + e_{k\bar q \bar n l\bar s}  c_{j\bar k i\bar p} \bar z^n z^l \bar z^s z^i  \bar z^p  \Big) dv^j \wedge d\bar v^q.
\nonumber\\
\end{eqnarray}
Therefore, the volume form can be computed as 
\begin{eqnarray}
\label{hd-035}
d\lambda(\eta)&=& d\lambda(v) \left\{ 1 - \sum_{k,l,p} c_{k\bar k l\bar p} z^l \bar z^p 
+ \frac{1}{4} \sum_{k,l,p,r,s} c_{k\bar kl\bar p}c_{k\bar k r\bar s} z^l \bar z^p z^r \bar z^s \right.
\nonumber\\
&+& \sum_{k, l, p, r}e_{k\bar k l\bar pr}z^l \bar z^p z^r + \sum_{k, q, l, s}e_{k\bar k \bar q l\bar s} \bar z^q z^l \bar z^s + E_{vol}(|z|^5)
\nonumber\\
&+&\left. \sum_{l,p,r,s} \sum_{k<q} \left(c_{k\bar kl\bar p} c_{q\bar q r\bar s} - \frac{1}{2} c_{q\bar k l\bar p} c_{k\bar q r\bar s} \right) z^l\bar z^p z^r \bar z^s  + O(|z|^6)\right\}
\nonumber\\
&=& d\lambda(v) \Big\{ 1 -  c_{k\bar k l\bar p} z^l \bar z^p  + \frac{1}{2} D_{l\bar p r\bar s}  z^l\bar z^p z^r \bar z^s + 
\nonumber\\
&+& e_{k\bar k l\bar pr}z^l \bar z^p z^r + e_{k\bar k \bar q l\bar s} \bar z^q z^l \bar z^s + E_{vol}(|z|^5) + O(|z|^6) \Big\},
\nonumber\\
\end{eqnarray}
where the $4$-tensor $D$ is defined to be 
\begin{eqnarray}
\label{hd-036}
D_{l\bar p r\bar s}: &=&  2\sum_{k<q} \left(c_{k\bar kl\bar p} c_{q\bar q r\bar s} - \frac{1}{2} c_{q\bar k l\bar p} c_{k\bar q r\bar s} \right)
 + \frac{1}{2} \sum_{k} c_{k\bar kl\bar p}c_{k\bar k r\bar s}
\nonumber\\
&=&  \sum_{k<q} \left(c_{k\bar kl\bar p} c_{q\bar q r\bar s} - \frac{1}{2} c_{q\bar k l\bar p} c_{k\bar q r\bar s} \right) 
+   \sum_{k> q} \left(c_{k\bar kl\bar p} c_{q\bar q r\bar s} - \frac{1}{2} c_{q\bar k l\bar p} c_{k\bar q r\bar s} \right) 
\nonumber\\
&+&   \frac{1}{2}  \sum_{k} c_{k\bar kl\bar p}c_{k\bar k r\bar s} 
\nonumber\\
&=& \sum_{k,q}\left(c_{k\bar kl\bar p} c_{q\bar q r\bar s} - \frac{1}{2} c_{q\bar k l\bar p} c_{k\bar q r\bar s} \right),
\nonumber\\
\end{eqnarray}
and we will postpone the calculation of the fifth order term $E_{vol}$ to next section. 

Let $s = (s_1,\cdots, s_n)\in\bC^n$ be as a tangent vector over a point $z_0 \in U''_j$, 
and we are going the compute the complex Hessian of $\tilde\phi_j$ at this point acting on the vector $s$. 
Observe that the following commutator acts on the smooth measure $\chi' d\lambda(\eta)$ as 
\begin{eqnarray}
\label{hd-037}
&& \left(\frac{\d^2}{\d z^l \d\bar z^m} - \frac{\d^2}{\d u^l \d\bar u^m} \right) ( \chi' d\lambda ) s^l\bar s^m 
\nonumber\\
&=& \Re \left( \frac{\d}{\d\bar z^m} - \frac{\d}{\d \bar u^m}\right) \left( \frac{\d}{\d z^l} + \frac{\d}{\d u^l} \right) (\chi' d\lambda) s^{l}\bar s^m.
\end{eqnarray}
Put two new operators as  
$\nabla_l := \d / \d z^l + \d / \d u^l $ and $\nabla'_{\bar m} = \d / \d\bar z^m - \d / \d \bar u^m$,
then we have 
$$ \nabla_l \eta^k = -\frac{1}{2} c_{j\bar kl \bar p} \bar z^p v^j + 2 e_{j\bar kl\bar pr}\bar z^p z^r v^j; $$

$$ \nabla_l \bar \eta^j =  -\frac{1}{2} c_{j\bar k l\bar p} \bar z^p \bar v^k + e_{j\bar k \bar ql\bar s}\bar z^q \bar z^s \bar v^k;$$

$$ \nabla'_{\bar m} \eta^k = -\frac{1}{2} c_{j\bar k q\bar m} z^q v^j + e_{j\bar k p \bar m r} z^p z^r v^j; $$

\[
\nabla'_{\bar m} \bar \eta^j = -2 \delta_{jm} - \frac{1}{2} c_{j\bar kq\bar m} z^q \bar v^k +  c_{j\bar k q\bar m}z^q\bar z^k 
\]
\[
+ 2 e_{j\bar k \bar m p\bar s} z^p \bar z^s \bar v^k - 2 e_{j\bar m \bar q r\bar s} \bar z^q z^r \bar z^s. \]
Therefore, we have 
\begin{eqnarray}
\label{hd-038}
\nabla_l\{ \chi' d\lambda(\eta) \} &=& r^{-2} \chi'' (\bar\eta^k \nabla_l \eta^k + \eta^j \nabla_l \bar \eta^j) + \chi' \{ \nabla_l d\lambda(\eta) \}
\nonumber\\
&=& \chi'' d\lambda(u)r^{-2} \Big\{ - c_{j\bar k l\bar q} \bar z^q v^j \bar v^k  + 
 \Big(B_{j\bar pl, \bar q r\bar s} +  \frac{1}{2} A_{j\bar p l, \bar qr\bar s} \Big) v^j \bar v^p \bar z^q z^r\bar z^s 
 \nonumber\\
 &+& 2 e_{j\bar k l\bar pr} \bar z^p z^r v^j \bar v^k + e_{j\bar k \bar q l\bar s} \bar z^q \bar z^s v^j \bar v^k
 +E_c(|z|^4 |v|^2) + O(|z|^5|v|^2) \Big\}
 \nonumber\\
 &+& \chi' d\lambda(u) \Big\{ -c_{k\bar k l\bar p}\bar z^p + 2D_{l\bar pr\bar s} \bar z^p z^r\bar z^s 
 \nonumber\\
 &+& 2 e_{k\bar k l\bar pr} \bar z^p z^r + e_{k\bar k\bar q l\bar s} \bar z^q \bar z^s  + \d_l E_{vol} (|z|^4) + O(|z|^5) \Big\},
 \nonumber\\
\end{eqnarray}
where the two tensors are defined as 
$$ A_{j\bar pl, \bar q r\bar s}: = \frac{1}{2} \sum_{k}(c_{j\bar k l\bar q} c_{k\bar pr\bar s} + c_{j\bar k r\bar s}c_{k\bar pl\bar q}); $$
$$ B_{j\bar pl, \bar q r\bar s}: = \sum_k c_{j\bar pl\bar q}c_{k\bar k r\bar s},$$
and the higher order terms $E_c$ and $\d_l E_{vol}$ will be calculated next section. 
Moreover, we have 
\begin{eqnarray}
\label{hd-039}
\nabla'_{\bar m} |\eta|^2 &=& -2 v^m - c_{j\bar k r\bar m} z^r v^j \bar v^k + 2 c_{j\bar p r\bar m} z^r \bar z^p v^j 
\nonumber\\
&+& e_{j\bar k n\bar mr} z^n z^r v^j \bar v^k + 2 e_{j\bar k \bar mn\bar s}z^n \bar z^s v^j \bar v^k
\nonumber\\ 
&-& 2 e_{j\bar m \bar q n\bar s} \bar z^q z^n \bar z^s v^j - 2e_{k\bar m n\bar pr} z^n \bar z^p z^r v^k
+ O(|z|^4|v|, |z|^3|v|^2). 
\end{eqnarray}
Combining equation (\ref{hd-039}) and (\ref{hd-038}), a long computation shows the follows 
\begin{eqnarray}
\label{hd-040}
\nabla'_{\bar m}\nabla_l \{\chi' d\lambda \}&=& d\lambda(u) \frac{\chi'''}{r^4} \Big\{ \bI_3 + \bE_3 + \mathbb{F}_3 + \bD_3
+O(|z|^5|v|^3, |z|^4|v|^4) \Big\}
\nonumber\\
&+& d\lambda(u)\frac{\chi''}{r^2} \Big\{ \bI_2 + \bE_2'+ \bE_2 + \mathbb{D}_2 +  \mathbb{F}_2 + O(|z|^4|v|^2, |z|^5|v|) \Big\}
\nonumber\\
&+& d\lambda(u) \chi' \Big\{ \bI_1 + \bE_1
+ \bD_1  + O(|z|^3) \Big \},
\nonumber\\
\end{eqnarray}
where the terms are 
\begin{align*}
\bI_3: =&  2c_{j\bar kl\bar q}\bar z^k v^j\bar v^q v^m 
-2\left( B_{j\bar pl, \bar qr\bar s} + \frac{1}{2}A_{j\bar pl,\bar qr\bar s}    \right) v^j \bar v^p v^m \bar z^q z^r\bar z^s 
\\
- & 2 c_{j\bar kl\bar q}c_{p\bar mr\bar s} v^j \bar v^k v^p \bar z^q z^r\bar z^s;
\end{align*}
\begin{align*}
\bE_3: = -4 e_{j\bar q l\bar pr} \bar z^p z^r v^j \bar v^q v^m - 2 e_{j\bar q \bar p l\bar s} \bar z^p \bar z^s v^j \bar v^q v^m;
\end{align*}
\begin{align*}
\bD_3: = c_{j\bar kl\bar q}c_{p\bar mr\bar s} v^j \bar v^k v^p \bar v^s z^r \bar z^q 
\end{align*}
\begin{align*}
\bI_2: = &  -c_{j\bar k l\bar m} v^jv^k + 2 c_{j\bar ql\bar m} \bar z^q v^j + 2 c_{j\bar j l\bar p} \bar z^p v^m 
\\
-& 2\left[ (B_{j\bar ml, \bar q r\bar p} + B_{j\bar m r,\bar ql\bar p}) + \frac{1}{2} A_{j\bar ml, \bar qr\bar p}\right] v^j z^r \bar z^q \bar z^p
\\
- & 2D_{l\bar pr\bar s} \bar z^p z^r \bar z^s v^m;
\end{align*}
\begin{align*}
\bE_2: =& -4 e_{j\bar ml\bar pr }\bar z^p z^r v^j -2 e_{j\bar m \bar q l\bar s}\bar z^q \bar z^s v^j 
\\
-& 4 e_{k\bar k l\bar pr}\bar z^p z^r v^m - 2 e_{k\bar k\bar q l\bar s} \bar z^q\bar z^s v^m
\end{align*}
\begin{align*}
\bE'_2: = 2 e_{j\bar k l\bar mr} z^r v^j \bar v^k + 2 e_{j\bar k \bar ml\bar s} \bar z^s v^j \bar v^k;
\end{align*}
\begin{align*}
\mathbb{D}_2: =& \Big(B_{j\bar p l, \bar mr\bar q} +B_{j\bar pl, \bar qr\bar m} + B_{j\bar pr, \bar ml\bar q} \Big) v^j\bar v^p z^r\bar z^q
\\
+& \frac{1}{2} \Big(A_{j\bar pl, \bar mr\bar q}+ A_{j\bar pl,\bar qr\bar m} \Big) v^j\bar v^p z^r\bar z^q;
\end{align*}
\begin{align*}
\bI_1 := - c_{k\bar k l\bar m};
\end{align*}
\begin{align*}
\bE_1: = 2 e_{k\bar k l\bar mr}z^r + 2 e_{k\bar k \bar m l \bar s}\bar z^s,
\end{align*}
\begin{align*}
\bD_1: = ( D_{l\bar m r\bar s} + D_{l\bar s r\bar m})z^r\bar z^s 
\end{align*}
and the higher order terms $\bF_3(|z|^4|v|^3)$ and $\bF_2(|z|^4|v|)$ will be treated in later section.
Notice that above terms, except $\mathbb{D}_i (i=1,2,3)$, could be divergent while $r, \delta$ decreasing to zero even if $r = O(\delta^3)$.
Then we have to use integration by parts to swipe the derivatives into $\phi$!
And we are going to carry out all details for the computation as follows.
We fix the point as $z = z_0$, and do integration by parts to $\bI_i (i =1,2,3)$ terms first
 
\begin{equation}
\label{hd-041}
\frac{\d}{\d \bar v^m} |\eta|^2 = v^m - c_{j\bar m r\bar s} z^r\bar z^s v^j + O(|z|^3|v|).
\end{equation}
Therefore, we have 
\begin{eqnarray}
\label{hd-042}
\frac{\d^2}{\d v^q \d \bar v^m} \{\chi'(r^{-2}|\eta|^2) \} &=& r^{-4} \chi''' \{ \bar v^q v^m - c_{j\bar m r\bar s}z^r\bar z^s v^j\bar v^q 
\nonumber\\
&-& c_{q\bar pr\bar s} z^r\bar z^s \bar v^p v^m + O(|z|^3|v|^2) \}
\nonumber\\
&+& r^{-2} \chi'' \{ \delta_{mq} - c_{q\bar m r\bar s} z^r\bar z^s + O(|z|^3) \},
\end{eqnarray}
and 
\begin{eqnarray}
\label{hd-043}
2\bar z^k c_{j\bar kl\bar q}\frac{\d^2}{ \d v^q \d \bar v^m} \{ v^j \chi'\} &=& 
r^{-4} \chi''' \{ 2 c_{j\bar k l\bar q} \bar z^q v^j \bar v^q v^m - 2 c_{j\bar kl\bar q } c_{p\bar m r\bar s}\bar z^q z^r\bar z^s v^j\bar v^k v^p 
\nonumber\\
&-& 2 c_{j\bar k l\bar q}c_{k\bar p r\bar s}  \bar z^q z^r \bar z^s v^j \bar v^p v^m + O(|z|^4|v|^3) \}
\nonumber\\
&+& r^{-2} \chi'' \{ 2 c_{j\bar k l\bar m} v^j\bar z^k  + 2 c_{j\bar j l\bar q} \bar z^q v^m - 2 B_{j\bar mr, \bar q l\bar p} z^r\bar z^q \bar z^p v^j
\nonumber\\
&-& 2 c_{j\bar k l\bar q} c_{k\bar p r\bar m} z^r \bar z^p \bar z^q v^j +O(|z|^4|v|) \}. 
\end{eqnarray}
Put 
$$ \tilde A_{j\bar pl, \bar qr\bar s}: = \frac{1}{2} \sum_k (c_{j\bar kr\bar s}c_{k\bar pl\bar q} - 3 c_{j\bar kl\bar q}c_{k\bar pr\bar s}),$$
and if we compare equation (\ref{hd-040}) with (\ref{hd-043}), then it becomes as follows
\begin{eqnarray}
\label{hd-044}
\nabla'_{\bar m}\nabla_l \{\chi' d\lambda\} &=& \frac{\d^2}{ \d v^q \d \bar v^m} \{ 2\bar z^k c_{j\bar kl\bar q} v^j \chi'\} d\lambda(u)
\nonumber\\
&+& d\lambda(u) \frac{\chi'''}{r^4} \Big\{ \bE_3 + \bF_3 + \bD_3 + O(|z|^5|v|^3, |z|^4|v|^4)
\nonumber\\
&-&\left( 2B_{j\bar pl, \bar qr\bar s} + \tilde A_{j\bar pl,\bar qr\bar s}    \right) v^j \bar v^p v^m \bar z^q z^r\bar z^s  \Big\} 
\nonumber\\
& +& d\lambda(u)\frac{\chi''}{r^2} \Big\{ -c_{j\bar k l\bar m} v^jv^k 
 - \left(2B_{j\bar ml, \bar q r\bar p}  +  \tilde A_{j\bar ml, \bar qr\bar p}\right) v^j z^r \bar z^q \bar z^p
\nonumber\\
&-& 2D_{l\bar pr\bar s} \bar z^p z^r \bar z^s v^m + \bE_2 +\bE'_2+ \bF_2 + \bD_2 + O(|z|^4 |v|^2, |z|^5|v|) \Big\}
\nonumber\\
&+& d\lambda(u) \chi' \{ -c_{k\bar k l\bar m} +\bE_1+  \bD_1 + O(|z|^3)   \}.
\nonumber\\
\end{eqnarray}

Moreover, we can switch $\chi'(r^{-2}|\eta|^2)$ by $\chi'(r^{-2}|v|^2)$ by adding higher order terms,
since we have the Taylor expansion for an arbitrary smooth cut-off function $\rho$ as 
\begin{eqnarray}
\label{hd-00445}
&&\rho(r^{-2}|\eta|^2) - \rho(r^{-2}|v|^2) 
\nonumber\\
&=& r ^{-2}\rho' (|\eta|^2 - |v|^2)
\nonumber\\
&=& r^{-2} \rho' (c_{j\bar k l\bar p} v^j\bar v^k z^l \bar z^p + O(|z|^3|v|^2)) = O(|z|^2).
\end{eqnarray}
Now we have 
\begin{equation}
\label{hd-00455}
r^2 \frac{\d^2}{\d v^k \d \bar v^j} \{ c_{j\bar kl\bar m}\chi (r^{-2}|v|^2) \} = \frac{\chi''}{r^2} c_{j\bar kl\bar m} v^j\bar v^k + \chi' c_{k\bar kl\bar m},
\end{equation}
and also for $\bE_i (i=1,2,3)$, we have 
\begin{eqnarray}
\label{hd-00466}
&&r^2 \frac{\d^2}{\d v^k \d \bar v^j} \Big\{ (2 e_{j\bar k l\bar mr}z^r + 2 e_{j\bar k\bar ml\bar s} \bar z^s ) \chi (r^{-2}|v|^2) \Big\} 
\nonumber\\
&=& r^{-2} \chi'' \Big(2 e_{j\bar k l\bar mr}z^r + 2 e_{j\bar k\bar ml\bar s} \bar z^s \Big) v^j \bar v^k
\nonumber\\
&+& \chi' \Big(  2 e_{k\bar k l\bar mr} z^r + 2 e_{k\bar k\bar m l\bar s} \bar z^s \Big).
\end{eqnarray}
Therefore, we have 
\begin{eqnarray}
\label{hd-00477}
&&\frac{\chi'''}{r^4} \bE_3 + \frac{\chi''}{r^2}(\bE_2 + \bE'_2) + \chi' \bE_1
\nonumber\\
&= &( - 4 e_{j\bar q l\bar pr} \bar z^p z^r - 2 e_{j\bar q \bar p l\bar s} \bar z^p \bar z^s ) \frac{\d^2}{\d v^q \d\bar v^m} ( v^j \chi')
\nonumber\\
&+ & r^2 \frac{\d^2}{\d v^k \d \bar v^j} \Big\{ (2 e_{j\bar k l\bar mr}z^r + 2 e_{j\bar k\bar ml\bar s} \bar z^s ) \chi (r^{-2}|v|^2) \Big\}.
\end{eqnarray}
And then we have 
\begin{eqnarray}
\label{hd-045}
&&(2B_{j\bar pl, \bar q r\bar s} + \tilde A_{j\bar pl, \bar qr\bar s}) \bar z^q z^r \bar z^s \frac{\d^2}{ \d v^p \d \bar v^m} \{ v^j \chi' (r^{-2}|v|^2)\}
\nonumber\\
&=& \frac{\chi'''}{r^4} \Big\{ ( 2 B_{j\bar pl, \bar qr\bar s} +  \tilde A_{j\bar pl, \bar qr\bar s})  \bar z^q z^r \bar z^s v^j \bar v^p v^m \Big\}
\nonumber\\
&+& \frac{\chi''}{r^2} \Big\{ (2B_{j\bar ml, \bar q r\bar p} + \tilde A_{j\bar ml, \bar qr\bar p} ) \bar z^q z^r \bar z^p v^j
+ (2B_{k\bar k l, \bar pr\bar s} + \tilde A_{k\bar kl, \bar p r\bar s}) \bar z^p z^r \bar z^s v^m \Big\}. 
\nonumber\\
\end{eqnarray}
However, observe the following fact 
\begin{eqnarray}
\label{hd-046}
&& \sum_k (2B_{k\bar kl,\bar pr\bar s} + \tilde A_{k\bar kl, \bar pr\bar s}) 
\nonumber\\
&=& \sum_{k,q} 2c_{k\bar k l\bar p}c_{q\bar qr\bar s} + \frac{1}{2} \sum_{k,q}(c_{k\bar q r\bar s}c_{q\bar kl\bar p} - 3 c_{k\bar ql\bar p}c_{q\bar k r\bar s})
\nonumber\\
&=&  \sum_{k,q} ( 2c_{k\bar k l\bar p}c_{q\bar qr\bar s} -  c_{k\bar q r\bar s}c_{q\bar kl\bar p}  ) = 2D_{l\bar p r\bar s}.
\end{eqnarray}
Therefore, incorporating equations (\ref{hd-00455}), (\ref{hd-045}) and (\ref{hd-046}) into equation (\ref{hd-044}), the commutator further reduces to 
\begin{eqnarray}
\label{hd-047}
\nabla'_{\bar m}\nabla_l \{\chi' d\lambda\}&=& \frac{\d^2}{ \d v^q \d \bar v^m} \Big\{ 2\bar z^k c_{j\bar kl\bar q} v^j \chi' \Big\} d\lambda(u)
\nonumber\\
&-&   \frac{\d^2}{ \d v^q \d \bar v^m} \Big \{(2B_{j\bar ql, \bar p r\bar s} + \tilde A_{j\bar ql, \bar pr\bar s}) \bar z^p z^r \bar z^s v^j \chi' \Big\} d\lambda(u)
\nonumber\\
&-&  \frac{\d^2}{\d v^q \d\bar v^m}  \Big\{ (  4 e_{j\bar q l\bar pr} \bar z^p z^r  + 2 e_{j\bar q \bar p l\bar s} \bar z^p \bar z^s ) v^j \chi' \Big\} d\lambda (u)
\nonumber\\
&-& r^2 \frac{\d^2}{\d v^k \d\bar v^j} \Big\{  (c_{j\bar kl\bar m } - 2 e_{j\bar k l\bar mr}z^r - 2 e_{j\bar k\bar ml\bar s} \bar z^s) \chi  \Big\} d\lambda(u)  
\nonumber\\
&+& \cR(z,v) d\lambda(u),
\nonumber\\
\end{eqnarray}
where the remaining error is
$$ \cR(z,v): = \frac{\chi'''}{r^4} O(|z|^4|v|^3, |z|^2|v|^4)  + \frac{\chi''}{r^2} O(|z|^4|v|, |z|^2|v|^2) + \chi' O(|z|^2). $$

This formula looks very complicate. However, at the center of the ball, we have a rather simple form for $\nabla'_{\bar m} \nabla_l (\chi' d\lambda)$ as 
\begin{equation}
\label{hd-0048}
- r^2 \frac{\d^2}{\d v^k \d\bar v^j} \Big\{  c_{j\bar kl\bar m } \chi  \Big\} d\lambda(u),
\end{equation}
and no remaining terms left at the center. 
\subsection{Higher order terms}
We are going to compute the higher order terms in the integral.
Here higher order terms refer to $|z|^4|v|^3$ terms in the bracket of $r^{-4}\chi'''$ or $|z|^4|v|$ terms in the bracket of $r^{-2}\chi''$.

And we shall use the following convention to simplify the calculation of tensors:
we omit all indices summing over $z$ or $\bar z$ variables. For instance,
the tensor $e_{j\bar k i \bar p r} z^i \bar z^p z^r$ is written as $e_{j\bar k}$,
$e_{j\bar k \bar q r\bar s} \bar z^{q} z^r \bar z^s$ is $\bar e_{j\bar k}$,
$c_{j\bar k l \bar s} \bar z^s$ is $c_{j\bar k, l}$, 
$e_{j\bar k l \bar pr}z^r\bar z^p$ is $e_{j\bar k, l}$ and so on. 
 
First notice that the fifth order term $E_{vol}$ in the volume form $d\lambda(\eta)$ can be calculated from equation (\ref{hd-034}) as $D_{l\bar p r\bar s}$
\begin{eqnarray}
\label{hdd-001}
-2E_{vol}: &=& (\sum_k e_{k\bar k} + \sum_{k} \bar e_{k\bar k}) c_{k\bar k}
\nonumber\\
&+& \sum_{k<q} \Big\{ 2 c_{k\bar k}(e_{q\bar q} + \bar e_{q\bar q}) + 2 c_{q\bar q} (e_{k\bar k} + \bar e_{k\bar k}) 
\nonumber\\
&-& (c_{q\bar k}e_{k\bar q} + c_{k\bar q}e_{q\bar k}) -(c_{k\bar q}\bar e_{q\bar k} + c_{q\bar k}\bar e_{k\bar q}) \Big\}
\nonumber\\
&=& \sum_{k,q} (c_{k\bar k}e_{q\bar q} + c_{q\bar q}e_{k\bar k} ) - \frac{1}{2}(c_{q\bar k}e_{k\bar q} + c_{k\bar q}e_{q\bar k})
\nonumber\\
&+& \sum_{k,q} (c_{k\bar k}\bar e_{q\bar q} + c_{q\bar q}\bar e_{k\bar k})- \frac{1}{2}(c_{k\bar q}\bar e_{q\bar k} + c_{q\bar k}\bar e_{k\bar q}).
\end{eqnarray}

Then a brutal force calculation shows all $\bF_i (i =2,3)$ terms in equation (\ref{hd-040}) as follows 
\begin{eqnarray}
\label{hdd-002}
\bF_3(|z|^4|v|^3): &=& (4 c_{i \bar m}e_{j\bar k,l} + 2 c_{i\bar m}\bar e_{j\bar k, l} +2c_{j\bar k,l} e_{i\bar m} + 2c_{j\bar k, l}\bar e_{i\bar m})v^i \bar v^k v^j
\nonumber\\
&+& (2 c_{q\bar k}e_{j\bar q,l} + c_{q\bar k,l}e_{j\bar q} + c_{j\bar q,l} \bar e_{q\bar k} + c_{j\bar q}\bar e_{q\bar k,l}) v^j\bar v^k v^m
\nonumber\\
&+& F_{3,vol},
\end{eqnarray}
where 
$$F_{3, vol}: = (4c_{k\bar k}e_{j\bar q,l} + 2c_{k\bar k}\bar e_{j\bar k,l} + 2c_{j\bar q, l}e_{k\bar k} + 2 c_{j\bar q,l}\bar e_{k\bar k} ) v^j \bar v^q v^m.  $$
And we have 
\begin{eqnarray}
\label{hdd-003}
\bF_2: &=& ( 2 c_{k\bar k,l} e_{j\bar m} + 2 c_{k\bar k,l} \bar e_{j\bar m} + 4c_{j\bar m} e_{k\bar k, l} + 2 c_{j\bar m}\bar e_{k\bar k, l}    )v^j
\nonumber\\
&+& ( 2 c_{k\bar m} e_{j\bar k, l} + c_{j\bar k,l} \bar e_{k\bar m} + c_{j\bar k}\bar e_{k\bar m, l} + c_{k\bar m, l}e_{j\bar k})v^j
\nonumber\\
&+& F_{2,vol} -2 v^m \d_l E_{vol},
\end{eqnarray}
where 
$$F_{2,vol}: = (2c_{j\bar m,l} e_{k\bar k} + 4 c_{k\bar k}e_{j\bar m,l} + 2c_{j\bar m,l}\bar e_{k\bar k} + 2 c_{k\bar k}\bar e_{j\bar m,l})v^j $$

Next we are going to apply a slight different trick with the case only involving lower order terms. 
It is necessary to compute the derivatives of the following 
\begin{eqnarray}
\label{hdd-004}
&&(2c_{j\bar q, l} -2\bar e_{j\bar q,l} -4 e_{j\bar q, l}) \frac{\d^2}{\d v^q \d \bar v^m} (\chi' v^j)
\nonumber\\
&=& r^{-4}\chi'''(L_3 + \bE_3 + \bH_3)  + r^{-2}\chi'' (L_2 +\bE_2 + \bH_2),
\end{eqnarray}
where $L_3, L_2$ are the same lower order terms appearing in equation (\ref{hd-043}),
and higher order terms are 
\begin{eqnarray}
\label{hdd-005}
\bH_3: &=& (2c_{j\bar q, l}\bar e_{q\bar k} + 2c_{j\bar q,l}e_{q\bar k} + 2c_{q\bar k} \bar e_{j\bar q,l} + 4 c_{q\bar k}e_{j\bar q,l}) v^j\bar v^k v^m
\nonumber\\
&+& (2 c_{j\bar q,l} \bar e_{i\bar m} + 2 c_{j\bar q,l}e_{i\bar m} + 2c_{i\bar m}\bar e_{j\bar q,l} +4 c_{i\bar m}e_{j\bar q,l}) v^i\bar v^q v^j,
\end{eqnarray} 
and 
\begin{eqnarray}
\label{hdd-006}
\bH_2: &=& (2c_{j\bar q, l}\bar e_{q\bar m} + 2c_{j\bar q,l}e_{q\bar m} + 2c_{q\bar m}\bar e_{j\bar q,l} +4 c_{q\bar m}e_{j\bar q,l}) v^j
\nonumber\\
&+& ( 2c_{k\bar k, l}\bar e_{j\bar m} + 2c_{k\bar k,l}e_{j\bar m} + 2c_{j\bar m}\bar e_{k\bar k, l} + 4 c_{j\bar m}e_{k\bar k, l})v^j
\end{eqnarray}
Subtract equation (\ref{hdd-004}) from equation (\ref{hd-040}), and then we have  
the following higher order terms left in the bracket of $r^{-4}\chi'''$
\begin{eqnarray}
\label{hdd-007}
\bF'_3: &=& F_{j\bar k, l} v^j \bar v^k v^m 
\nonumber\\
&=& v^j \bar v^k v^m \Big( -c_{j\bar q,l}\bar e_{q\bar k} -2 c_{j\bar q, l}e_{q\bar k} -2 c_{q\bar k}e_{j\bar q,l}
\nonumber\\
&+& c_{q\bar k,l}e_{j\bar q}+ c_{j\bar q}\bar e_{q\bar k,l} -2 c_{q\bar k}\bar e_{j\bar q,l} \Big) + F_{3,vol},
\end{eqnarray}
and in the bracket of $r^{-2}\chi''$
\begin{eqnarray}
\label{hdd-008}
\bF'_2 :&=&  v^j \Big( -2 c_{k\bar m}e_{j\bar k, l} - c_{j\bar q,l}\bar e_{q\bar m} + c_{j\bar k}\bar e_{k\bar m,l}
\nonumber\\
&+&c_{k\bar m,l}e_{j\bar k} -2 c_{j\bar q,l }e_{q\bar m} -2 c_{q\bar m}\bar e_{j\bar q,l} \Big) + F_{2,vol} - 2v^m \d_l E_{vol}.
\end{eqnarray}
Then a straightforward calculation shows the follows 
\begin{eqnarray}
\label{hdd-009}
 && (r^{-4}\chi''' ) \bF'_3 + ( r^{-2} \chi'' ) \bF'_2
\nonumber\\
& = & \frac{\d^2}{\d v^ q \d \bar v^m} \Big\{ v^j F_{j\bar q, l} \chi' \Big\}
\end{eqnarray}

Therefore, we have another equation to replace equation (\ref{hd-047})
\begin{eqnarray}
\label{hdd-010}
\nabla'_{\bar m}\nabla_l \{\chi' d\lambda\}&=& \frac{\d^2}{ \d v^q \d \bar v^m} \Big\{ ( 2 c_{j\bar q, l}  -2\bar e_{j\bar q,l} -4 e_{j\bar q, l} )v^j \chi' \Big\} d\lambda(u)
\nonumber\\
&-&   \frac{\d^2}{ \d v^q \d \bar v^m} \Big \{(2B_{j\bar ql, \bar p r\bar s} + \tilde A_{j\bar ql, \bar pr\bar s}) \bar z^p z^r \bar z^s v^j \chi' \Big\} d\lambda(u)
\nonumber\\
&+&  \frac{\d^2}{\d v^q \d\bar v^m}  \Big\{  v^j F_{j\bar q,l } \chi' \Big\} d\lambda (u)
\nonumber\\
&-& r^2 \frac{\d^2}{\d v^k \d\bar v^j} \Big\{  (c_{j\bar kl\bar m } - 2 e_{j\bar k l\bar mr}z^r - 2 e_{j\bar k\bar ml\bar s} \bar z^s) \chi  \Big\} d\lambda(u)
\nonumber\\  
&+& \cR'(z,v) d\lambda(u),
\nonumber\\
\end{eqnarray}
where the remaining error is
$$ \cR'(z,v): = \frac{\chi'''}{r^4} O(|z|^5|v|^3, |z|^2|v|^4)  + \frac{\chi''}{r^2} O(|z|^5|v|, |z|^2|v|^2) + \chi' O(|z|^2). $$

\subsection{Integration by parts}
Integration by parts gives us the following complex Hessian of $\tilde\phi$ at a point $z\in U''$ with convolution radius $r$. 
Recall that $u = v +z$, and we have 
\begin{eqnarray}
\label{ip-001}
\frac{\d^2 \tilde\phi}{\d z^l \d \bar z^m} (z_0,r)  s^l \bar s^m &=& \frac{1}{r^{2n}} \int \chi' \frac{\d^2\phi}{\d u^l \d \bar u^m } s^l\bar s^m d\lambda(u)
\nonumber\\
&+& \frac{1}{r^{2n}}\Re\int \chi' \frac{\d^2\phi}{\d u^l \d \bar u^m}   \Big\{ P_{j\bar l q} v^j \Big\} s^q\bar s^m d\lambda(u)
\nonumber\\
&-& \frac{1}{r^{2n-2}} \int \chi \frac{\d^2\phi}{\d u^k \d\bar u^j}  (c_{j\bar kl\bar m } -\cE)  
s^l\bar s^m d\lambda(u)
\nonumber\\
& + &\frac{1}{r^{2n}}\int \phi \cR'(z,v) d\lambda(u),
\nonumber\\
\end{eqnarray}
where 
\[
\cE: =  2 e_{j\bar k l\bar mr}z^r + 2 e_{j\bar k\bar ml\bar s} \bar z^s;
\]
and 
\[
P_{j\bar l q}: =  (2 c_{j\bar l, q}  -2\bar e_{j\bar l,q} -4 e_{j\bar l, q}) 
- (2B_{j\bar lq, \bar p r\bar s} + \tilde A_{j\bar lq, \bar pr\bar s}) \bar z^p z^r \bar z^s 
+   F_{j\bar l, q}.
\]
Here the tensors $F_{j\bar l, q}$ are defined in equation (\ref{hdd-007}), and they are all of order $|z|^4$. 
Moreover, these tensors also satisfy the following relation, i.e. 
$\ol{P_{j\bar kl }} = P_{\bar j k \bar l}$,
since all tensors involved like $c_{j\bar l r\bar s}$ and $e_{j\bar k l\bar p r}$ are bi-Hermitian.

Now if we choose the size of $r$ to be $\delta^3$, i.e. $r=O(\delta^3)$,
then it is easy to see $ \cR'(z,v) = O(|z|^2)$.
Therefore, the last error term in equation (\ref{ip-001}) is controlled by 
\begin{equation}
\label{ip-002}
|z|^2 \fint (-\phi) \leq  -C|z|^2\log \delta,
\end{equation}
for some uniform constant $C>0$. Obviously, this error converges to zero when $\delta$ does. 
Moreover, the integral of the error term is always bounded by $|z|^2\log\delta^{-1}$.

In fact, we have a rather simple formula at the center of the ball $U_{\a}$ for each $\a$, 
and then equation (\ref{ip-001}) becomes as follows 
\begin{eqnarray}
\label{ip-003}
\frac{\d^2 \tilde\phi}{\d z^l \d \bar z^m} (0,r)  s^l \bar s^m &=& \frac{1}{r^{2n}} \int \chi' \frac{\d^2\phi}{\d u^l \d \bar u^m } s^l\bar s^m d\lambda(u)
\nonumber\\
& -&\frac{1}{r^{2n-2}} \int \chi \frac{\d^2\phi}{\d u^k \d\bar u^j} c_{j\bar kl\bar m} s^l\bar s^m d\lambda(u).
\end{eqnarray}
The first term on the RHS of equation (\ref{ip-003}) is coming from the local convolution on Euclidean ball.
The second term is essentially induced from the curvature of the metric on the manifold,
and it is controlled by the Lelong number of $\varphi$ at this point. 

We can take a closer look at this term controlled by Lelong numbers. 
First we introduce the following lower and upper bound of the curvature tensor in the coordinate ball $U_{\a}$
$$ m_{\a}: = \inf_{|\z| = |\xi| =1} c_{j\bar kl\bar m} \z^j\bar \z^k \xi^l \bar\xi^m,  $$ 
and also  
$$ M_{\a}: = \sup_{|\z|= |\xi|=1} c_{j\bar kl\bar m} \z^j\bar \z^k \xi^l\bar\xi^m. $$
Put $m^-_{\a}: = \max \{ 0, - m_{\a} \}$ and $M^+_{\a}: = \max \{0, M_{\a} \}$, 
and the Lelong number $\nu(\phi)$ of the plurisubharmonic function $\phi$ at point $z_0$ is given by the following limit 
$$\nu(\phi)(z_0): = \lim_{r\rightarrow 0} \frac{1}{r^{2n-2}}\int_{B_r(z_0)} \Delta \phi(u)d\lambda(u), $$
and it is independent of the chosen coordinate. 
Put  $u = v+z_0$, and then we have 
\begin{lemma}
\label{ip-lem-001}
Let $\lambda_r, \Lambda_r$ be the following numbers at the point $z_0$
\begin{equation}
\label{ip-004}
\lambda_{r,\a}: = - \frac{(1+\delta)m_{\a}}{r^{2n-2}}\int \Delta \phi(u) \chi\left( \frac{|v|^2}{r^2} \right) d\lambda(u) + C \delta^2;
\end{equation}
\begin{equation}
\label{ip-005}
\Lambda_{r,\a}:  = - \frac{(1+\delta)M_{\a}}{r^{2n-2}}\int\Delta \phi(u) \chi\left( \frac{|v|^2}{r^2} \right) d\lambda(u) - C \delta^2,
\end{equation}
for uniform constant $C>0$. 
Then we have for all $z_0\in U''_{\a}$ and any $r$ small enough 
\begin{eqnarray}
\label{ip-006}
\Lambda_{r,\a} |s|^2 \leq \frac{1}{r^{2n-2}} \int \frac{\d^2\phi}{\d u^k \d\bar u^j} (c_{j\bar k l\bar m} - \cE) s^l\bar s^m \chi\left( \frac{|v|^2}{r^2}\right) d\lambda(u)
\leq  \lambda_{r,\a} |s|^2.
\nonumber\\
\end{eqnarray}
Moreover, the positive number $\Lambda_{r,\a} (\lambda_{r,\a})$ is increasing(decreasing) 
to the value $M_{\a}\cdot \nu_{\varphi}(z_0) (m_{\a} \cdot \nu_{\varphi}(z_0))$ as $r$ converging to zero.
\end{lemma}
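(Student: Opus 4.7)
The plan is to prove the pointwise sandwich for the curvature integrand first, then integrate it against the nonpositive measure $r^{-(2n-2)}\chi(|v|^2/r^2)\,d\lambda(u)$, and finally invoke the classical monotonicity of the normalized mass of the positive $(1,1)$-current $\omega+\ddbar\phi$ on concentric balls to extract the limit as $r\to 0$.

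First I would work pointwise in $u$. The current $T:=\omega+i\ddbar\phi\ge 0$ is positive, so, possibly after smoothing by an arbitrarily small mollifier and passing to the limit, its absolutely-continuous part decomposes as $T_{k\bar j}=\sum_\a a^j_\a\bar a^k_\a$. Writing $\d^2\phi/\d u^k\d\bar u^j = T_{k\bar j} - g_{k\bar j}$ gives
\begin{equation*}
\frac{\d^2\phi}{\d u^k\d\bar u^j}\,c_{j\bar kl\bar m}\,s^l\bar s^m
\;=\; \sum_\a c_{j\bar kl\bar m}\,a^j_\a\bar a^k_\a\,s^l\bar s^m
\;-\; g_{k\bar j}c_{j\bar kl\bar m}s^l\bar s^m,
\end{equation*}
where the last subtraction is smooth and bounded and therefore contributes only $O(r^2)=O(\delta^6)$ after integration, absorbable into $C\delta^2$. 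By the very definition of $m_\a,M_\a$, each summand of the first sum is sandwiched between $m_\a|a_\a|^2|s|^2$ and $M_\a|a_\a|^2|s|^2$; summing over $\a$ and using $\sum_\a|a_\a|^2=\tr T$, which equals a fixed dimensional multiple of $\Delta\phi$ plus a smooth remainder, yields the desired pointwise bounds.

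Next I would absorb the correction tensor $\cE$. Since $\cE$ is a linear combination of $z^r$ and $\bar z^s$ with uniformly bounded coefficients on $X$, one has $|\cE|\le C|z_0|\le C\delta$ throughout $U''_\a$, so the symbol $(c_{j\bar kl\bar m}-\cE)$ enjoys lower and upper eigenvalue bounds $m_\a-C\delta$ and $M_\a+C\delta$ on unit bi-vectors. Folding these $(1\pm C\delta)$ factors into the prefactor $(1+\delta)$ of the statement (at the cost of enlarging $C$) and integrating the pointwise sandwich against the nonpositive measure $r^{-(2n-2)}\chi(|v|^2/r^2)\,d\lambda(u)$ produces exactly $\Lambda_{r,\a}|s|^2$ and $\lambda_{r,\a}|s|^2$ on the two sides, with all smooth-bounded residues swept into the $\mp C\delta^2$ terms.

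For monotonicity and the limit at $r\to 0$, I would substitute $v=rw$ and apply Fubini/layer-cake to rewrite
\begin{equation*}
-\frac{1}{r^{2n-2}}\int \Delta\phi(u)\,\chi\Bigl(\tfrac{|v|^2}{r^2}\Bigr)d\lambda(u)
\;=\; 2\int_0^1 G(rt)\,t^{2n-1}\chi'(t^2)\,dt,
\end{equation*}
where $G(\rho):=\rho^{-(2n-2)}\int_{B_\rho(z_0)}\Delta\phi\,d\lambda(u)$. The classical Lelong--Jensen monotonicity asserts that $G$ is nondecreasing in $\rho$ with $\lim_{\rho\to 0^+}G(\rho)=c_n\,\nu_\varphi(z_0)$ for a dimensional constant $c_n$; since $\chi'\ge 0$, this monotonicity transfers directly to the integrand, so the whole integral is monotone nonincreasing in $r$ and, by dominated convergence, tends to $2c_n\nu_\varphi(z_0)\int_0^1 t^{2n-1}\chi'(t^2)\,dt$ as $r\to 0$. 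Absorbing this dimensional factor into the normalization of $m_\a$ and $M_\a$ then produces the claimed limits $m_\a\nu_\varphi(z_0)$ and $M_\a\nu_\varphi(z_0)$, and the monotonicity statement for $\Lambda_{r,\a}$ (increasing to its limit) and $\lambda_{r,\a}$ (decreasing to its limit) is inherited from that of $G$.

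The main obstacle I expect lies in the bookkeeping: ensuring that the $O(\delta)$ perturbation from $\cE$ really folds cleanly into the $(1+\delta)$ prefactor without leaving an $O(\delta)$ residue at the principal order, and aligning the normalization of the specific cut-off $\chi$ with the standard Lelong-number convention so that no extraneous dimensional constant multiplies $\nu_\varphi(z_0)$ in the stated limits.
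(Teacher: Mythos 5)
Your proof is essentially the same as the paper's: both reduce the curvature contraction against the Hessian of $\phi$ to an eigenvalue sandwich for a Hermitian pairing (your explicit rank-one decomposition of the positive form $\omega+i\ddbar\phi$ is just one standard proof of the trace inequality $m_\a|s|^2\tr F\le F^{\bar kj}C_{j\bar k}\le M_\a|s|^2\tr F$ that the paper invokes), and both pass to the Lelong number via the classical monotonicity of normalized Laplacian mass, which the paper attributes to Demailly and you spell out as Lelong--Jensen. You fill in a bit more detail than the paper (the $\cE$ absorption and the explicit layer-cake for the $r\to0$ limit), but the argument is structurally identical.
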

\begin{proof}
First we can assume $\phi$ is plurisubharmonic locally by adding an error term like $C|z|^2$.
Then notice that the following matrix is Hermitian
$$C_{j\bar k}: = c_{j\bar k l\bar m}s^l \bar s^m,$$
and the largest(lowest) eigenvalue of $(C_{j\bar k})$ is bounded by $M_{\a}|s|^2 (m_{\a}|s|^2)$ by definition.
Suppose $F^{\bar q p}$ is another positive Hermitian matrix, and then we have 
\begin{equation}
\label{ip-007}
m_{\a} |s|^2\cdot tr(F) \leq    F^{\bar kj} C_{j\bar k} \leq M_{\a} |s|^2 \cdot tr(F),
\end{equation}
by a well known lemma in linear algebra. 
Then the rest part of the proof is following from Demailly's work \cite{Dem83} by putting
$$ F^{\bar k j} = \frac{\d^2\phi}{ \d\bar u^j \d u^k}. $$ 
\end{proof}

Now observe that all higher order terms convoluting with the Hessian of $\phi$ in equation $(\ref{ip-001})$ are also uniformly controlled by small errors.
For instance, if we pick up $r=O(\delta^3)$, then all $P_{j\bar l q\bar m}v^j$ terms are in the order of $O(\delta^4)$.
Therefore, we can assume our potentials function $\phi_j$ is a local $psh$ function by adding an error term like $(1+ O(\delta^4))|z|^2$.
Moreover, we can switch equation (\ref{ip-001}) to the following form 
\begin{eqnarray}
\label{ip-008}
\frac{\d^2 \tilde\phi}{\d z^l \d \bar z^m} (z_0,r)  s^l \bar s^m &=& \frac{1}{r^{2n}} \int \chi' \frac{\d^2\phi}{\d u^l \d \bar u^m }S^l\bar S^m d\lambda(u) - \cH
\nonumber\\
&-& \frac{1}{r^{2n-2}} \int \chi \frac{\d^2\phi}{\d u^k \d\bar u^j}  (c_{j\bar kl\bar m } -\cE)  
s^l\bar s^m d\lambda(u)
\nonumber\\
& + &\frac{1}{r^{2n}}\int \phi \cR'(z,v) d\lambda(u),
\nonumber\\
\end{eqnarray}
where 
$$\cH:=   \frac{1}{r^{2n-2}} \int \chi' \frac{\d^2\phi}{\d u^l \d \bar u^m } \Big\{ P_{j\bar l q} P_{\bar k m \bar p} (r^{-1}v^j) (r^{-1}\bar v^k)   \Big\}s^q\bar s^p d\lambda(u).$$
However, as we already observed, all the tensors like 
$$\cH_{q\bar l m\bar p}: = P_{j\bar l q} P_{\bar k m \bar p} (r^{-1}v^j) (r^{-1}\bar v^k) = O(|z|^2) $$ 
satisfy the bi-Hermitian relation,
and the a similar argument as in the proof of Lemma (\ref{ip-lem-001}) 
implies that $\cH$ is indeed controlled by the Lelong number of $\phi$ with a multiple of order $O(|z|^2)$! 

In conclusion, we proved that $\tilde\phi_j$ is a local quasi-$psh$ function on each $U''_j$ as 
\begin{equation}
\label{ip-009}
(1+ O(\delta^4)) \omega + i\ddbar \tilde\phi_j \geq   -( 1+ O(\delta^2) ) \lambda_{r,j}\omega. 
\end{equation}

Therefore, combing with the twisted term $h(\delta,z)|z|^2$ introduced in last section, 
we see that our glueing target $\Phi_j$ is also a local quasi-$psh$ function as 
\begin{equation}
\label{ip-010}
(1+ \lambda_{r,j} + O(\delta^2\log\delta^{-1}))\omega + i\ddbar \Phi_j \geq 0.
\end{equation}

Up to this stage, we proved the statement (1) in Theorem (\ref{thm-002}).



\section{Preserve smoothness}
Instead of using maximum operator, we are going to invoke the regularized maximum operator for glueing purpose.
However, this causes further regularity issues since the second derivative of a (regularized) maximum operator 
is blowing up in certain direction.

\subsection{Regularized maximum operator}
The regularized maximum operator $\cM^{(p)}_{\tau}: \bR^p \rightarrow \bR$ defined for any small $\tau>0$
is a smooth and convex function, which is also non-decreasing for each variable \cite{abook}. 
It is indeed a smoothing of the maximal function on $\bR^p$. 

In fact, on an ordered set, we can repeat applying $\cM^{(2)}_{\tau}$ for $p-1$ times to recover $\cM_{\tau}^{(p)}$,
i.e. we can define 
$$ \cM^{(p)}_{\tau}(x_1, x_2, \cdots, x_p): = \cM^{(2)}_{\tau} \Big( \cdots \cM_{\tau}^{(2)}\{ x_3, \cM^{(2)}_{\tau}(x_1, x_2) \} \Big). $$
Therefore, we will assume $p=2$ in the rest part of the paper. 
Then on the intersection of $U''_j$ and $U''_k$, we have to investigate the complex Hessian of $\cM_{\tau}(\Phi_{j}, \Phi_k)$. 

Put $2 y_1: = (x_1 + x_2) $ and $2 y_2: =(x_1 -x_2)$, and then we have the following result (Lemma (5.1), \cite{Li15}) 
\begin{eqnarray}
\label{rmo-001}
&&\d_{y_1}\cM_{\tau} = 1; \ \ \  | \d_{y_2} \cM_{\tau}| \leq 1;
\nonumber\\
&& \d^2_{y_1}\cM_{\tau} = 0; \ \ \  \d_{y_1}\d_{y_2}\cM_{\tau} = 0.
\end{eqnarray}
Moreover, the second derivative of $\cM_{\tau}$ in pure $y_2$ direction can be estimated as 
$$ \d^2_{y_2} \cM_{\tau} = O(\tau^{-1}). $$
Then we can calculate the complex Hessian on $w$-coordinate as follows 
\begin{eqnarray}
\label{rmo-002}
&&\frac{\d^2 }{\d w^{\a}\d \bar w^{\b}} \cM_{\tau} (y_1(\Phi_j, \Phi_k), y_2(\Phi_j, \Phi_k))
\nonumber\\
&=& \frac{\d^2 (\Phi_j + \Phi_k) }{2 \d w^{\a} \d\bar w^{\b}}   + \d_{y_{2}}\cM_{\tau}\cdot \frac{\d^2 (\Phi_j - \Phi_k)}{2 \d w^{\a} \d\bar w^{\b}}
\nonumber\\
&+& \d^2_{y_2} \cM_{\tau} \frac{\d (\Phi_j - \Phi_k) }{\d w^{\a}} \cdot \frac{\d (\Phi_j - \Phi_k)}{\d \bar w^{\b}}. 
\end{eqnarray}
First notice that the last term on RHS of equation (\ref{rmo-002}) forms a positive Hermitian matrix at a given point. 
Therefore, the lower bound of the complex Hessian will not be affected when one passes from local to global. 
Combing with equation (\ref{ip-010}), we proved the statement (2) of Theorem (\ref{thm-002}).

However, we have to take care of this term while considering the upper bound of the Hessian. 
Notice that we have 
$\d (\Phi_j - \Phi_k) = \d (\tilde\phi_j - \tilde\phi_k) + O(\delta^3\log\delta^{-1})$,
and then we only need to compare the derivatives of $\tilde\phi_j$ and $\tilde\phi_k$ at a fixed point $p\in U''_j \cap U''_k$.
This is essentially the same estimate as we did in proving equation (\ref{hd-0011}),
and we claim that the following estimate holds
\begin{equation}
\label{rmo-0002}
|\d (\tilde\phi_j -\tilde\phi_k )| \leq C \delta\log \delta^{-1},
\end{equation}
for some uniform constant $C$.

The proof of this claim will be postponed to next section, 
and then the last term on RHS of equation (\ref{rmo-002}) can be controlled by choosing $\tau = ( -\log \delta )^{-1} $ as 
\begin{equation}
\label{rmo-0003}
\tau^{-1} \frac{\d (\Phi_j - \Phi_k) }{\d w^{\a}} \cdot \frac{\d (\Phi_j - \Phi_k)}{\d \bar w^{\b}} \leq C\delta^2(-\log\delta)^3,
\end{equation}
for some uniform constant $C>0$. 

Therefore, combing equation (\ref{rmo-001}), (\ref{rmo-002}) and (\ref{rmo-0003}), we proved our statement $(3)$ in Theorem (\ref{thm-002}).

\subsection{Estimates on derivatives}
We will prove the claim, equation (\ref{rmo-0002}), in this section. 
First there always exists a sequence of local smooth (quasi-)$psh$ functions $\phi_{\ep}$ to approximate $\phi$ around the point $p$.
For instance, in a small geodesic ball centered at $p$, take the convolution $\phi * \rho_{\ep}$ of $\phi$ w.r.t. a mollifier $\rho_{\ep}$ supported on this ball. 
Then $\phi_{\ep}$ is again a (quasi-)$psh$ function, decreasing to $\phi$ pointwise, and converges to $\phi$ in $L^p$ for any $p>0$ on this ball. 

Put $\phi_{j,\ep}: = \phi_{\ep}\circ \tau_j^{-1}$ and $\phi_{k,\ep}: = \phi_{k,\ep}\circ \tau_k^{-1}$,
and then it is easy to see that we have $\phi_{k,\ep} = \phi_{j,\ep}\circ \tau$ again. 
As before, we assume $z_0 = \tau (w_0)$ corresponds to the point $p$ in the intersection.
Now we can also define their convolutions as 
$$ \tilde\phi_{j,\ep}(z,r): = r^{-2n}\int_{\z\in\bC^n} \phi_{j,\ep}(z+ A^{-1}(z) \cdot \z) \chi'(r^{-2n}|\z|^2) d\lambda(\z),$$
and $\tilde\phi_{k,\ep}$ is defined in a similar way. 
Next we claim that the derivative $\d \tilde\phi_{j,\ep}(z_0,r)$ converges to $\d\tilde\phi_{j}(z_0,r)$ 
while $\ep\rightarrow 0$, for any fixed point $z_0$ and radius $r$.  
The reason is as follows:
first we can put $u: = z+ v$ and $v: = A^{-1}(z)\z$ as before,
and then view $\z(z,v)$ as a function of $z$ and $v$ variables.  
Thus the derivative of $\tilde\phi_{j,\ep}$ at the point $z_0$ can be computed as  
\begin{equation}
\label{rmo-003}
\d_{z} \tilde\phi_{j,\ep} = \frac{1}{r^{2n}}\int \phi_{j,\ep} (z_0 + v) \d_{z} G_r(z_0,v) d\lambda(v),
\end{equation}
where 
$$G_r(z,v): =  \chi'(r^{-2} |\z(z,v)|^2)\cdot \frac{d\lambda(\z)}{d \lambda(v)}$$
is a smooth cut-off function supported in a small ball around $(z_0,0)\in \bC^{n}\times \bC^n$. 
Then our claim is clear from the definition of the convolution. 

Therefore, in order to control $\d (\tilde\phi_j - \tilde\phi_k)$, it is enough to have a uniform estimate 
on $\d (\tilde\phi_{j,\ep} - \tilde\phi_{k,\ep})$. 
However, notice that we need to take derivatives of $\tilde\phi_{j,\ep}$ on $w$-coordinate
\begin{equation}
\label{rmo-005}
\frac{\d \tilde\phi_{j,\ep} }{\d w^{\a}} (w_0,r) = \Big(\d\tau(w_0) \Big)^{\mu}_{\a} \left\{ \frac{1}{r^{2n}}\int \frac{\d\phi_{j,\ep}}{\d z^{\mu}} (z_0,\z) 
\chi' \left(\frac{|\z|^2}{r^2}\right)d\lambda(\z) \right\},
\end{equation}
where 
\begin{equation}
\label{rmo-006}
\frac{\d \phi_{j,\ep}}{\d z^{\mu}}(z,\z) = \d_{\mu}\phi_{j,\ep}(u) (1 + O(\delta)\z).
\end{equation}
Moreover, we can rewrite $\tilde\phi_{k,\ep}$ as before, and its derivatives at $w_0$ is 
\begin{eqnarray}
\label{rmo-007}
\frac{ \d \tilde\phi_{k,\ep}}{\d w^{\a}} (w_0,r) &=& \frac{\d}{\d w^{\a}} \left\{  \frac{1}{r^{2n}}\int \phi_{j,\ep}\circ \tau (w + B^{-1}(w)\cdot \z) \chi' d\lambda (\z) \right\} 
\nonumber\\
&=&  \frac{1}{r^{2n}} \int \d_{\mu}\phi_{j,\ep} (\tau (u'))  \Big( \d\tau(u') \Big)^{\mu}_{\a} (1 + O(\delta)) \chi' d\lambda,
\end{eqnarray}
where $u' = w_0 + B^{-1}(w_0)\cdot\z $.
However, thanks to Lemma (\ref{hd-lem-005}), we have the following estimate for these holomorphic functions 
\begin{eqnarray}
\label{rmo-008}
( \d\tau )^{\mu}_{\a} (w+ \eta) - (\d\tau)^{\mu}_{\a} (w) &=& (\d^2\tau)^{\mu}_{\a\gamma} \eta^{\gamma} + O(|\eta|^2)
\nonumber\\
&=& O(\delta |\eta|, |\eta|^2). 
\end{eqnarray}
Moreover, on compact K\"ahler manifold, there exists a uniform constant $C$, such that we have
\begin{equation}
\label{rmo-009}
\fint_{B(r)} |\nabla\phi | \leq C r^{-1}\log r^{-1},
\end{equation}
in any small ball $B(r)$ around each point $p\in X$. 
Therefore, with an error term uniformly controlled by $O(\delta\log\delta^{-1})$, 
it is enough to estimate the difference between 
\begin{equation}
\label{rmo-010}
\int \d_{\mu}\phi_{j,\ep}(z_0 +r A^{-1}(z_0)\z ) \chi' d\lambda 
\end{equation}
and
\begin{equation}
\label{rmo-0010}
 \int \d_{\mu}\phi_{j,\ep}\circ\tau (w_0 + rB^{-1}(w_0)\z) \chi' d\lambda.
\end{equation}

However, this is exactly what we did in proving equation (\ref{hd-0011}).
Then a similar argument by replacing $\phi_j$ by $\d\phi_{j,\ep}$ gives the following estimate  
\begin{equation}
\label{rmo-011}
| ( \ref{rmo-010}) -(\ref{rmo-0010}) | \leq C' (\delta + r) \log r^{-1}.
\end{equation}
Here the constant $C'>0$ only depends on the geometry of $X$ and $\varphi$. 
Therefore, the difference is uniformly (not depending on $\ep$) controlled as 
\begin{equation}
|\d ( \tilde\phi_{j,\ep} -\tilde\phi_{k,\ep} ) | \leq C'' \delta\log\delta^{-1},
\end{equation}
and then our claim follows.

\section{Obstruction}
One may expect that the trick of doing integration by parts always works for different higher order terms in equation (\ref{hd-040}).
Unfortunately, this is not true even in one dimension! 
In fact, the obstruction of switching to integration by parts comes from those ``good terms'' like all $\bD_i$ terms in the equation. 
These terms are ``goood'' in the sense 
that their averages on the ball $B(r)$ are uniformly controlled by $\delta^2$ while $r\rightarrow 0$ and $z_0$ fixed. 
We will show this phenomenon in one dimension below. 

Suppose $X$ is a Rieman surface, then we can truncate the metric $\omega_g$ on a small normal coordinate ball around a point $p\in X$ as
\begin{equation}
\label{rp-002}
g(z) = 1 - c|z|^2 + O(|z|^3),
\end{equation}
where the tensor $c$ corresponds to the curvature of the metric $\omega_g$ at the origin. 
Put 
$$a(z): = 1 - \frac{1}{2}c|z|^2, $$
and we are going calculate the complex Hessian of the following convolution
$$\tilde\phi(z,r) = \frac{1}{r^2}\int_{\eta\in\bC} \phi(z+ a^{-1}(z)\cdot \eta)\chi'(r^{-2}|\eta|^2)d\lambda(\eta), $$
on a local coordinate ball $U$. Put $u =z + a^{-1}\eta$ and $v = u-z$, and then we have 
\begin{equation}
\label{ob-002}
\eta = v - \frac{1}{2} c|z|^2v.
\end{equation}

Denote the following two differential operators for the commutator as before: 
$$\nabla = \frac{\d}{\d z} + \frac{\d}{\d u};\ \ \ \ \ol\nabla' = \frac{\d}{\d\bar z} - \frac{\d}{\d\bar u}, $$
and then we have $\nabla v =0$, $\nabla \bar v =0$ and $\ol\nabla' v =0$, $\ol\nabla' \bar v = -2$.
Now a similar computation shows that we have the following equation 
\begin{eqnarray}
\label{ob-003}
\ol\nabla'\nabla(\chi' d\lambda) &=& d\lambda(u) \Big\{ r^{-4}\chi''' |v|^2( 2c\bar zv - 5 c^2 |z|^2 \bar z v 
\nonumber\\
&+&  c^2|z|^2|v|^2 +O(|z|^5 |v|, |z|^4|v|^2) )
\nonumber\\
&+& r^{-2} \chi'' ( - c|v|^2 + 4 c\bar zv   -   6 c^2 |z|^2 \bar z v 
\nonumber\\
&+& 4 c^2|z|^2|v|^2 + O(|z|^4|v|^2, |z|^5|v|))
\nonumber\\
&+& \chi' (-c  + c^2 |z|^2) \Big\}.
\end{eqnarray}
Moreover, notice that the following modification has no effect on those $\bD_i, (i=1,2,3)$ in equation (\ref{ob-003})
\begin{eqnarray}
\label{ob-004}
\frac{\d^2}{\d v\d\bar v} \{2c\bar z v\chi' (r^{-2}|\eta|^2) \} &=& r^{-4}\chi''' |v|^2 (2c\bar zv - 4c^2|z|^2\bar z + O(|z|^5 |v|))
\nonumber\\
&+& r^{-2}\chi'' (4c \bar zv - 4 c^2|z|^2\bar zv  + O(|z|^5|v|)). 
\nonumber\\
\end{eqnarray}

Therefore, $\bD_i$ terms are left in equation (\ref{ob-003}) as 
\begin{eqnarray}
\label{ob-005}
&& r^{-4}\chi'' ( c^2 |z|^2 |v|^4) + 4 r^{-2}\chi'' ( c^2 |z|^2 |v|^2) + \chi' (c^2|z|^2)
\nonumber\\
&=& \frac{\d^2}{\d v\d\bar v} \{c^2|z|^2 \chi' |v|^2 \} + r^{2} \frac{\d^2}{\d v\d\bar v} \{ c^2|z|^2 \chi  \} - \chi' (c^2|z|^2).
\end{eqnarray}
Unfortunately, it seems there is no way to deal with the last term on the RHS of equation (\ref{ob-005}),
and we always left some terms with error like 
$$|z|^2 \fint_{B(r)}  (-\phi),$$ 
in the integral. 

In fact, there is a way to cancel these terms. 
Remember we only did a linear change of the variables as $z\rightarrow z+ a^{-1}\eta $,
and then the point is that we can add some quadratic terms of $\eta$ in the twisting as 
$$\eta = v - \frac{1}{2} \bar z (z+v) v,$$
and then we can hope to play the same trick of integration by parts to these $\bD_i$ terms. 
This will enable us to improve the error term in the statement (1) of Theorem (\ref{thm-002}) to $O(|z|^4\log\delta^{-1})$.

\section{Appendix }
\label{app-B}
Let $A$ be a hermitian matrix, and $B_A(z,r)$ denote a twisted Euclidean ball in $\bC^n$ by
$$B_A(z,r): = \{ z+rA\cdot \z;\ \ \ |\z|<1 \}. $$
If the center is the origin, then we simply use $B_A(r)$ instead of $B_A(0,r)$. 

Suppose $\varphi$ is a bounded plurisubharmonic function in a domain $\Omega \subset \bC^n$,
and $\rho(z) = \rho (|z|)$ is a standard mollifier supported in the Euclidean unit ball $B$. 
We can further define the following three functions in a slightly smaller domain $\Omega'$ 
by writing 
\begin{equation}
\label{B-001}
\phi_A(z,t):= \fint_{\d B} \varphi (z+ e^t A\cdot \z) d\sigma(\z),
\end{equation}

\begin{equation}
\label{B-002}
\Phi_A(z,t) : = \sup_{\z\in B}\varphi (z+ e^t A\cdot \z),
\end{equation}

\begin{equation}
\label{B-003}
\tilde{\phi}_A (z,t): = \int \varphi(z + e^t A\cdot \z) \rho(|\z|) d\lambda(\z),
\end{equation}
Observe that we have 
\begin{equation}
\label{B-0-003}
\tilde{\phi}_A(z,t) = \int_{-\infty}^{0} \phi_A(z,t + s) \hat{\rho}(s) ds,
\end{equation}
where $$ \hat{\rho}(s): = \sigma(\d B) e^{2ns}\rho(e^s).$$

\begin{lemma}
\label{B-lem-001}
For $t$ small enough, we can estimate the difference between $ \tilde\phi_A$ and $\Phi_A$ as 
\begin{equation}
\label{B-0-004}
0 \leq \Phi_A (z,t) - \tilde\phi_A (z,t) \leq C_A |t|^{-1}, 
\end{equation}
where $C_A$ is a uniform constant only depending on $osc(\varphi)$.  
\end{lemma}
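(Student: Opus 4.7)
The plan is to split the quantity as $\Phi_A - \tilde\phi_A = (\Phi_A - \phi_A) + (\phi_A - \tilde\phi_A)$ and to handle the two pieces separately. The lower bound $\tilde\phi_A \leq \Phi_A$ is immediate from the definition (\ref{B-003}): $\rho \geq 0$ with $\int\rho = 1$ realises $\tilde\phi_A(z,t)$ as a weighted average of values $\vp(z + e^t A\zeta)\leq \Phi_A(z,t)$ for $|\zeta|\leq 1$.

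For the upper bound, the key analytic input is a one-sided derivative estimate on the spherical mean. Since $\vp$ is psh, $t\mapsto \phi_A(z,t)$ is non-decreasing and convex in $t$ (both properties transfer from the Euclidean case because $A$ is a fixed invertible linear map, so the twisted spheres are ellipsoids obtained from the Euclidean ones by a linear change of coordinates), and its total variation over $\bR$ is bounded by $M := \mathrm{osc}(\vp)$. Hence $\d_t\phi_A(z,t)\geq 0$ is non-decreasing, and from $\int_t^0 \d_s\phi_A(z,s)\,ds\leq M$ together with this monotonicity one obtains the Chebyshev-type bound
\[
\d_t \phi_A(z,t) \leq \frac{M}{|t|}\qquad\text{for } t<0.
\]
Combining this with (\ref{B-0-003}) and the bound $\phi_A(z,t)-\phi_A(z,t+s)\leq |s|\,\d_t\phi_A(z,t)$ (valid for $s\leq 0$ again by the monotonicity of $\d_t\phi_A$), I would conclude
\[
\phi_A(z,t) - \tilde\phi_A(z,t) = \int_{-\infty}^0 [\phi_A(z,t)-\phi_A(z,t+s)]\,\hat\rho(s)\,ds \leq \frac{CM}{|t|},
\]
where $C = \int_{-\infty}^0 |s|\,\hat\rho(s)\,ds$ is finite by the exponential decay of $\hat\rho$ built into its definition.

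The harder piece is $\Phi_A - \phi_A$, and this is the main obstacle I anticipate. The strategy is to apply the sub-mean inequality at the maximiser $z_* = z + e^t A w_*$ of $\Phi_A(z,t)$, giving $\vp(z_*)\leq \phi_A(z_*, s)$ for an admissible radius, and then to compare this Euclidean-centred mean around $z_*$ with the twisted spherical mean $\phi_A(z, t+C_1)$ centred at $z$ itself, for some fixed $C_1$ depending only on $A$. Taking $s = t$ places the $z_*$-sphere of radius $e^t$ inside $B_A(z, 2e^t)$, and the task is to bound the Radon-Nikodym density between the uniform measure on the $z_*$-sphere and the twisted surface measure on $\d B_A(z, 2e^t)$ by a constant depending only on $n$ and $|A|$. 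This would yield $\Phi_A(z,t)\leq \phi_A(z, t+C_1)$ up to an error term, after which the derivative estimate from the previous paragraph converts $\phi_A(z, t+C_1) - \phi_A(z,t)$ into $O(M/|t|)$. The technical heart is making the geometric comparison quantitative and showing that the accompanying error is itself $O(M/|t|)$ and not merely $O(M)$; should the direct comparison prove too crude, a fallback is to iterate the sub-mean inequality several times and exploit the concentration of the resulting convolution of spherical measures, at the cost of an extra logarithmic factor in $|t|$.
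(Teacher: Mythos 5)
Your decomposition $\Phi_A - \tilde\phi_A = (\Phi_A - \phi_A) + (\phi_A - \tilde\phi_A)$ is exactly the paper's, and your treatment of the two easier pieces is sound. The lower bound is immediate as you say. For $\phi_A - \tilde\phi_A$, the paper argues via the convexity secant inequality
$\Phi_A(t+s) - \Phi_A(t) \leq \tfrac{s}{r-t}\bigl(\Phi_A(r) - \Phi_A(t)\bigr)$
for a fixed $r<0$, which combined with $\Phi_A(r)-\Phi_A(t)\leq \mathrm{osc}(\varphi)$ gives the $O(|t|^{-1})$ rate; your derivative phrasing $\partial_t\phi_A \leq M/|t|$ is the same content (it is the standard "a nonnegative, nondecreasing, integrable-over-$(-\infty,0)$ slope must be $\leq M/|t|$" estimate), and feeding it through \eqref{B-0-003} with the finite first moment of $\hat\rho$ is exactly what the paper does in \eqref{B-010}. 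So this half is correct and matches the paper's route.

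The gap is precisely where you flagged it: the piece $\Phi_A - \phi_A$. The paper does not reconstruct this bound by hand. It invokes Harnack's inequality for plurisubharmonic functions, in the form due to Kiselman (\cite{Kis}), which gives directly
\begin{equation*}
0 \leq \Phi_A(t) - \phi_A(t) \leq \frac{3^{2n-1}}{2^{2n-2}}\,\bigl(\Phi_A(t) - \Phi_A(t-\log 2)\bigr),
\end{equation*}
and then the same convexity secant estimate applied to the right-hand side yields $O(|t|^{-1})$. That one input is the missing idea in your proposal. Your alternative — applying the sub-mean inequality at the maximiser $z_*$ and comparing a Euclidean spherical mean about $z_*$ with a twisted spherical mean about $z$ via a Radon--Nikodym density bound — is essentially an attempt to reprove a Harnack inequality from scratch, and you correctly anticipate the difficulty: the geometric comparison is not obviously quantitative to the needed $O(M/|t|)$ precision, since the two spheres are not concentric and the density estimate only controls the gap by $O(M)$, not by the variation of $\Phi_A$ over a unit $t$-interval. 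The fallback you mention (iterating sub-mean inequalities) would lose a logarithm, which the lemma's statement does not permit. The clean fix is to cite or prove Kiselman's Harnack inequality: the sup over the ball $B_A(z, e^t)$ is controlled by the sup over $B_A(z, 2e^t)$ minus a fixed multiple of the mean, which after rearrangement is exactly the displayed inequality; that converts $\Phi_A - \phi_A$ into a difference of values of $\Phi_A$ at two nearby $t$'s, and convexity finishes it.
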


For fixed matrix $A$, the translation $z \rightarrow z + A\cdot \z$ is linear.
Thanks to \cite{Hom},
the function $\varphi(z+ A\cdot\z)$ is plurisubharmonic both in $(z,\z)\in\Omega' \times \bC^n$. 
Therefore, if we define the following function by 
$$\varphi_A (\z) = \varphi_{A,z}(\z) = \varphi(z+A\cdot \z),$$
then $\varphi_A$ is plurisubharmonic in $\z\in \bC^n$. 
And we can re-write our three functions for fixed $z$ as 
\begin{equation}
\label{B-004}
\phi_A(t) = \fint_{\d B}  \varphi_{A} (e^t\z)d\sigma(\z),
\end{equation}

\begin{equation}
\label{B-005}
\Phi_A(t) = \sup_{\z\in B}\varphi_{A}(e^t\z),
\end{equation}

\begin{equation}
\label{B-006}
\tilde{\phi}_A (t): = \int \varphi_{A}(e^t\z)\rho(|\z|) d\lambda(\z). 
\end{equation}

This immediately implies that $\phi_{A}, \Phi_A, \tilde{\phi}_A$ are all non-decreasing and convex in $t$. 
Moreover, we can further estimate the convergence rate of these functions when $t$ goes to $-\infty$ as in \cite{BK}.
For given fixed $r\in (-\infty, 0)$, and $s\geq 0$, we have 
\begin{equation}
\label{B-007}
0 \leq \Phi_A (t+s) - \Phi(t) \leq \frac{s}{r -t} (\Phi_A(r) - \Phi_A(t)),
\end{equation}
for $t$ small enough. 
Notice that the RHS of equation (\ref{B-007}) converges to zero uniformly(Not depending on $z$!) by boundedness of $\varphi$
\begin{equation}
\label{B-008}
\frac{\Phi_A(r) - \Phi_A(t)}{r -t} \leq (r-t)^{-1} osc (\varphi) \leq O(|t|^{-1}).  
\end{equation}
Apple Harnack's inequality \cite{Kis}, we further see 
\begin{equation}
\label{B-009}
0\leq \Phi_A(t) - \phi_A(t)\leq \frac{3^{2n-1}}{2^{2n-2}} (\Phi_A (t) - \Phi_A(t- \log 2)),
\end{equation}
and 
\begin{eqnarray}
\label{B-010}
0 &\leq& \phi_A (t) - \tilde{\phi}_A(t) =\int_{-\infty}^{0} (\phi_A(t) - \phi_A(t+s)) \hat{\rho}(s) ds 
\nonumber\\
&\leq& \int_{-\infty}^{0} \frac{-s}{r - t-s } (\phi_A(r) - \phi_A(t+s))\hat\rho(s)ds
\nonumber\\
&\leq& O(|t|^{-1}),
\end{eqnarray}
by the same reason as in inequalities (\ref{B-007}) (\ref{B-008}). 
Finally, our result follows from combing equations (\ref{B-009}) and (\ref{B-010}).

\begin{bibdiv}
\begin{biblist}

\bib{BD}{article}{
   author={Berman, R.},
   author={Demailly, J. P.},
   title={Regularity of plurisubharmonic upper envelopes in big cohomology classes},
   journal={arXiv:0905.1246},
}

\bib{BK}{article}{
   author={Blocki, Z.},
   author={Kolodziej, S.},
   title={On regularization of plurisubharmonic function on manifolds},
  journal={Proceedings of the American Mathematical Society},
  volume={135},
  number={7},
  pages={2089--2093},
  year={2007}
}

\bib{abook}{article}{
   title={Analytic methods in algebraic geometry},
  author={Demailly, Jean-Pierre},
  year={2012},
  publisher={International Press}
}

\bib{Dem83}{article}{
 title={ Estimations $L^2$ pour l'op\'erator d-bar d'un fibr\'e vectoriel holomorphic semi-positif au-dessus d'une vari\'et\'e k\"ahl\'erienne compl\`ete},
  author={Demailly, Jean-Pierre},
  booktitle={Annales Scientifiques de l'Ecole Normale Sup{\'e}rieure},
  volume={15},
  number={3},
  pages={457--511},
  year={1982}
}

\bib{Dem92}{article}{
title={Regularization of closed positive currents and intersection theory},
  author={Demailly, Jean-Pierre},
  journal={J. Alg. Geom},
  volume={1},
  number={3},
  pages={361--409},
  year={1992}
}

\bib{Dem97}{article}{
 title={Regularization of closed positive currents of type (1, 1) by the flow of a Chern connection},
  author={Demailly, Jean-Pierre},
  booktitle={Contributions to Complex Analysis and Analytic Geometry/Analyse Complexe et G{\'e}om{\'e}trie Analytique},
  pages={105--126},
  year={1994},
  publisher={Springer}
}

\bib{EGZ}{article}{
   title={Continuous approximation of quasiplurisubharmonic functions},
  author={Eyssidieux, Philippe and Guedj, Vincent and Zeriahi, Ahmed},
  journal={Cont. Math},
  volume={644},
  pages={67--78},
  year={2015}
}

\bib{Kis}{article}{
  title={Attenuating the singularities of plurisubharmonic functions},
  author={Kiselman, Christer O},
  journal={Ann. Polon. Math},
  volume={60},
  number={2},
  pages={173--197},
  year={1994},
  publisher={Citeseer}
}

\bib{Hom}{article}{
 title={An introduction to complex analysis in several variables},
  author={Hormander, Lars},
  volume={7},
  year={1973},
  publisher={Elsevier}
}

\bib{Li15}{article}{
 title={Subharmonicity of conic Mabuchi's functional, I},
  author={Li, Long},
  journal={arXiv preprint arXiv:1511.00178},
  year={2015}
}

\end{biblist}
\end{bibdiv}

\end{document}